\documentclass[12pt]{article}
\usepackage{amsmath,amsfonts,amsthm,amscd, amssymb}

\usepackage[all]{xy}

\theoremstyle{plain}
\newtheorem{thm}{Theorem}[section]
\newtheorem{pro}[thm]{Proposition}
\newtheorem{cor}[thm]{Corollary}
\newtheorem{lem}[thm]{Lemma}

\theoremstyle{definition}
\newtheorem{ex}[thm]{Example}
\newtheorem{rem}[thm]{Remark}
\newtheorem{defn}[thm]{Definition}

%%%Operator names

\newcommand{\Tor}{\operatorname{Tor}}

\newcommand{\res}{\operatorname{res}}

\newcommand{\End}{\operatorname{End}}
\newcommand{\Hom}{\operatorname{Hom}}
\newcommand{\Sym}{\operatorname{Sym}}

\newcommand{\Img}{\operatorname{Im}}
\newcommand{\coker}{\operatorname{coker}}

\newcommand{\id}{\operatorname{id}}

%%%%Rings and fields
\newcommand{\FF}{{\mathbb{F}}}

\newcommand{\ZZ}{{\mathbb{Z}}}

%%romans

%%liealgebras
\newcommand{\gl}{{\mathfrak{gl}}}
\newcommand{\g}{{\mathfrak{g}}}

%%matrices

%%Shorts
\def\div{\raise 1pt \hbox{\big|}}

\newcommand{\vv}{\, | \,}

\newcommand{\iso}{\cong}

\def\maprt#1{\smash{\,\mathop{\longrightarrow}\limits^{#1}\,}}

%%%%Page shapers
\addtolength{\textheight}{0.18\textheight}
\addtolength{\textwidth}{0.15\textwidth}

\addtolength{\oddsidemargin}{-0.6in}
\addtolength{\evensidemargin}{-0.6in}

\begin{document}

\title{Bockstein Closed 2-Group Extensions and Cohomology of Quadratic Maps}
\author{Jonathan Pakianathan and Erg\"un Yal\c c\i n}
\maketitle

\begin{abstract}
A central extension of the form $E: 0 \to V \to G \to W \to 0$,
where $V$ and $W$ are elementary abelian $2$-groups, is called
Bockstein closed if the components  $ q_i \in H^*(W, \FF _2 )$ of
the extension class of $E$ generate an ideal which is closed under
the Bockstein operator. In this paper, we study the cohomology ring
of $G$ when $E$ is a Bockstein closed $2$-power exact extension. The
mod-$2$ cohomology ring of $G$ has a simple form and it is easy to
calculate. The main result of the paper is the calculation of the
Bocksteins of the generators of the mod-$2$ cohomology ring using an
Eilenberg-Moore spectral sequence. We also find an interpretation of
the second page of the Bockstein spectral sequence in terms of a new
cohomology theory that we define for Bockstein closed quadratic maps
$Q : W \to V$ associated to the extensions $E$ of the above form.

\smallskip

\noindent 2000 {\it Mathematics Subject Classification.} Primary:
20J06; Secondary: 17B56.
\end{abstract}

\section{Introduction}\label{sect:intro}

Let $G$ be a $p$-group which fits into a central extension of the
form
$$ E: 0\to V \to G \to W \to 0$$
where $V$, $W$ are $\FF_p$-vector spaces of dimensions $n$ and $m$,
respectively. $E$ is called Bockstein closed if the components  $
q_i \in H^*(W, \FF _p )$ of the extension class of $E$ generate an
ideal which is closed under the Bockstein operator. We say $E$ is
$p$-power exact if the following three conditions are satisfied: (i)
$m=n$, (ii) $V$ is the Frattini subgroup of $G$, and (iii) the
$p$-rank of $G$ is equal to $n$. Associated to $G$ there is a $p$-th
power map $(\ )^p :W\to V$. When $p$ is odd, the $p$-th power map is
a homomorphism and if $E$ is also $p$-power exact, then it is an
isomorphism. Using this isomorphism, one can define a Bracket $[\ ,
\ ]: W \times W \to W$ on $W$ which turns out to be a Lie Bracket if
and only if the associated extension is Bockstein closed. This was
studied by Browder-Pakianathan \cite{BrPa} who also used this fact
to give a complete description of the Bockstein cohomology of $G$ in
terms of the Lie algebra cohomology of the associated Lie algebra.
This theory was later used by Pakianathan \cite{Pak} to give a
counterexample to a conjecture of Adem \cite{Adem} on exponents in the integral
cohomology of $p$-groups for odd primes $p$.

In the case where $p=2$, the $2$-power map $(\ )^2 :W \to V$ is not
a homomorphism, so the results of Browder-Pakianathan do not
generalize to $2$-groups in a natural way. In this case, the
$2$-power map is a quadratic map $Q: W \to V$ where the associated
bilinear map $B: W \times W  \to V$ is induced by taking commutators
in $G$. The $2$-power exact condition is equivalent to the
conditions: (i) $m=n$, (ii) the elements $\{ Q(w)\vv w\in W\}$
generate $V$, and (iii) if $Q(w)=0$ for some $w\in W$, then $w=0$.
We studied the quadratic maps associated to Bockstein closed
extensions in an earlier paper, and showed that an extension $E$ is
Bockstein closed if and only if there is a bilinear map $P: V\times
W \to V$ such that
\begin{equation}\label{eqn:P-relation}
P(Q(w), w')= B(w,w')+P(B(w,w'), w)
\end{equation}
holds for all $w,w' \in W$ (see Theorem 1.1 in \cite{PaYa}). In some
sense this is the Jacobi identity for the $p=2$ case. If there is a
quadratic map $Q: W\to W$ which satisfies this identity with $P=B$,
then the vector space $W$ becomes a $2$-restricted Lie algebra with
$2$-power map defined by $w^{[2]}=Q(w)+w$ for all $w \in W$. But in
general there are no direct connections between Bockstein closed
quadratic maps and mod-$2$ Lie algebras.

In this paper, we study the cohomology of Bockstein closed $2$-power
exact extensions. We calculate the mod-$2$ cohomology ring and give
a description of the Bockstein spectral sequence. As in the case
when $p$ is odd, the Bockstein spectral sequence can be described in
terms of a cohomology theory based on our algebraic data. In this
case, the right cohomology theory is the cohomology $H^*(Q,U)$ of a
Bockstein closed quadratic map $Q: W \to V$. We define this
cohomology using an explicit cochain complex associated to the
quadratic map. The definition is given in such a way that the low
dimensional cohomology has interpretation in terms of extensions of
Bockstein closed quadratic maps. For example, $H^0(Q,U)$ gives the
$Q$-invariants of $U$ and $H^1(Q, U)\iso \Hom_{\bf Quad} (Q, U)$ if
$U$ is a trivial $Q$-module. Also, $H^2(Q,U)$ is isomorphic to the
group of extensions of $Q$ with abelian kernel $U$ (see Proposition
\ref{pro:H^2}). The definition of $H^*(Q,U)$ is given in Section
\ref{sect:cohdefn}. To keep the theory more general, in the
definition of $H^*(Q,U)$ we do not assume that the quadratic map $Q$
is $2$-power exact.

In Section \ref{sect:mod2cohomology}, we calculate the mod-$2$
cohomology ring of a Bockstein closed $2$-power exact group $G$
using the Lyndon-Hochschild-Serre spectral sequence associated to
the extension $E$. This calculation is relatively easy and it is
probably known to experts in the field (see, for example,
\cite{MiSy} or \cite{Rusin}). The mod-$2$ cohomology ring of $G$ has
a very nice expression given by
$$H^*(G, \FF_2)\cong A^*(Q) \otimes \FF_2 [s_1,...,s_n]$$
where $s_i$'s are some two dimensional generators and the algebra
$A^*(Q)$ is given by
$$A^* (Q)= \FF _2 [x_1,\dots, x_n]/(q_1, \dots, q_n)$$
where $\{x_1, \dots, x_n\}$ forms a basis for $H^1(W)$ and $q_i$'s
are components of the extension class $q \in H^2 (W, V)$ with
respect to a basis for $V$. The action of the Bockstein operator on
this cohomology ring gives valuable information about the question
of whether the extension can be uniformly lifted to other
extensions. Also finding Bocksteins of generators of the mod-$2$
cohomology algebra is the starting point for calculating the
integral cohomology of $G$. We prove the following:

\begin{thm}\label{thm:R=L} Let  $E: 0 \to V \to G  \to W \to 0 $
be a Bockstein closed $2$-power exact extension with extension class
$q$ and let $\beta (q)=L q$. Then the mod-$2$ cohomology of $G$ is
in the above form and $\beta (s) =Ls+\eta$ where $s$ denotes a
column matrix with entries in $s_i$'s and  $\eta$ is a column matrix
with entries in $H^3 (W, \FF _2)$.
\end{thm}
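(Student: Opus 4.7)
The plan is to analyze $\beta(s_i)$ using the LHSSS filtration on $H^*(G, \FF_2)$ to pin down its shape, and then to identify its leading coefficients via the Eilenberg--Moore spectral sequence, as the abstract announces. From the mod-$2$ cohomology computation (Section \ref{sect:mod2cohomology}), each $s_i \in H^2(G, \FF_2)$ is a permanent-cycle lift of $y_i^2 \in E_\infty^{0, 2}$ in the LHSSS for $E$, where $y_i = v_i^* \in H^1(V, \FF_2)$. The class $y_i^2$ survives because, by Kudo's transgression theorem, $d_3(y_i^2) = \beta(q_i) = (Lq)_i$ is a boundary in $E_3^{3, 0}$ (each $q_j = d_2(y_j)$ is already killed). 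Naturality of $\beta$ under restriction to $V$, together with $\beta(y_i^2) = \beta^2(y_i) = 0$, forces $\beta(s_i) \in F^1 H^3(G, \FF_2)$. Inspecting $E_\infty$ in total degree $3$ gives $E_\infty^{0,3} = E_\infty^{2,1} = 0$, $E_\infty^{1,2} = H^1(W, \FF_2) \cdot \spant\{s_j\}$, and $E_\infty^{3,0} = A^3(Q)$, so
$$ \beta(s_i) = \sum_{j=1}^n \ell_{ij}\, s_j + \eta_i, \qquad \ell_{ij} \in H^1(W, \FF_2),\ \eta_i \in H^3(W, \FF_2), $$
already matching the shape in the statement.

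The main obstacle is identifying $(\ell_{ij})$ with the matrix $L$. I would argue this via the Eilenberg--Moore spectral sequence associated to the fibration $BG \to BW \xrightarrow{q} K(V, 2)$ (where $q$ classifies the extension), which computes $H^*(BG, \FF_2) \cong \Tor_R(M, \FF_2)$, with $R = H^*(K(V, 2), \FF_2)$ polynomial on the Serre generators $\iota_i, \beta \iota_i, \dots$ and $M = H^*(BW, \FF_2)$, and $R$-action given by $q^*(\iota_i) = q_i$ and $q^*(\beta \iota_i) = \beta(q_i) = \sum_j L_{ij} q_j$. Using a Koszul-type resolution on the Serre generators and the regularity of $(q_1, \dots, q_n)$ in $M$ (from $2$-power exactness), one presents $s_i$ as the $\Tor^{-1}$-cycle $\overline{\beta \iota_i} - \sum_j L_{ij}\, \bar \iota_j$. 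Naturality of $\beta$ with respect to $q^*$ translates the identity $\beta(\iota_i) = \beta \iota_i$ on $R$ into a chain-level operation $\bar \iota_i \mapsto \overline{\beta \iota_i}$ on the Koszul resolution; applying this to the cycle for $s_i$ and reducing modulo boundaries (using $\beta^2(q_i) = 0$ and the regularity of the $q_i$) yields $\beta(s_i) \equiv \sum_j L_{ij}\, s_j$ up to a filtration-$3$ remainder in $H^3(W, \FF_2)$, confirming $\ell_{ij} = L_{ij}$.

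The remainder $\eta_i$ lives in $A^3(Q) \subseteq H^3(W, \FF_2)$ and depends on the non-canonical choice of lift $s_i$ of $y_i^2$ (different choices shift $\eta_i$ by the Bockstein of an inflated $H^2(W, \FF_2)$-class), so the theorem does not specify it further. The principal technical challenge is handling the chain-level Bockstein within the EMSS and the Koszul boundary cancellation; the required ingredients are standard, but the bookkeeping is delicate. If the EMSS bookkeeping proves too cumbersome, a fallback is to fix explicit $2$-cocycles $f_i$ for $q_i$, $1$-cochains $\ell_{ij}$ for $L_{ij}$, and a $2$-cochain $\xi_i$ witnessing $\beta(f_i) = \sum_j \ell_{ij} \cup f_j + \delta \xi_i$, then construct an explicit cochain representative $\varphi_i$ of $s_i$ on $G$ and verify the identity for $\beta(\varphi_i)$ directly.
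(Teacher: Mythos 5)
Your overall strategy is the same as the paper's: the ring structure and the shape $\beta(s_i)=\sum_j \ell_{ij}s_j+\eta_i$ come from the LHS filtration (your degree-$3$ count of $E_\infty$ is a slightly more explicit version of the paper's dimension argument), and the identification $\ell_{ij}=L_{ij}$ is done in the Eilenberg--Moore spectral sequence using exactly the cycle you write down: your $\overline{\beta\iota_i}-\sum_j L_{ij}\bar\iota_j$ is the paper's $v_i=[1|\kappa_{i0}|1]+\sum_j[L_{ij}|\kappa_j|1]$. However, there are two places where your mechanism, as described, would not go through. First, you propose to run the Bockstein on a Koszul-type resolution by ``translating $\beta(\iota_i)=\beta\iota_i$ into a chain-level operation $\bar\iota_i\mapsto\overline{\beta\iota_i}$.'' The Bockstein is neither $R$-linear nor multiplicative, so it does not induce an operation on $\Tor_R(M,\FF_2)$ computed from an arbitrary (in particular Koszul) resolution; this is precisely why the paper abandons the Koszul model at this point and passes to the bar resolution, where Smith's Corollary 4.4 gives a Steenrod module structure on the $p=0$ and $p=-1$ lines via their identification inside $H^*(X\times Y)$ and $H^*(X\times B\times Y)$, compatible with the filtration on $H^*(X\times_B Y)$. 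You would need to locate your cycle in the bar model (as the paper does, checking by a dimension count on $E_1^{-2,3}$ that it is not a boundary and hence represents $u_{i0}$) before applying the Cartan formula.

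Second, the ``Koszul boundary cancellation'' you defer is not routine bookkeeping but the main computation. Applying $Sq^1$ to $v_i$ via the Cartan formula gives
$$Sq^1(v)=Lv+\bigl(\beta(L)+L^2\bigr)c,$$
where $c$ is the column vector with entries $[1|\kappa_j|1]$, and one must show the second term dies in $E_2^{-1,4}$. The paper does this by using $[\beta(L)+L^2]q=0$ together with regularity of $q_1,\dots,q_n$ to write $\beta(L)+L^2=\sum_i T_iq_i$ for scalar matrices $T_i$, then extracting from $\sum_{i,j}T_i(k,j)q_iq_j=0$ (and the linear independence of the $q_iq_j$, $i<j$) the relations $T_i(k,i)=0$ and $T_i(k,j)=T_j(k,i)$; these symmetric matrices $a_k(i,j)=T_i(k,j)$ are exactly what allow one to exhibit explicit elements $\alpha_k=\sum_{i,j}a_k(i,j)[1|\kappa_i|\kappa_j|1]\in E_1^{-2,4}$ with $d_1(\alpha)=(\beta(L)+L^2)c$. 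You name the right inputs ($\beta^2(q)=0$ and regularity), but without producing the bounding elements the argument is incomplete: the correction term is not zero on the nose, only a $d_1$-boundary, and seeing this requires the symmetry analysis above. Your fallback via explicit cochains is a genuinely different (and in principle workable) route, but it is not the one the paper takes.
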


The proof of this theorem is given in Section \ref{sect:proof of the
main theorem} using the Eilenberg-Moore spectral sequence associated
to the extension. The key property of the EM-spectral sequence is
that it behaves well under the Steenrod operations. The Steenrod
algebra structure of the EM-spectral sequence was studied by L.
Smith \cite{Smith1}, \cite{Smith2} and D. Rector \cite{Rector}
independently in a sequence of papers. Here we use only a special
case of these results. More precisely, we use the fact that the
first two vertical lines in the EM-spectral sequence are closed
under the action of the Steenrod algebra. This is stated as
Corollary 4.4 in \cite{Smith1}.

The column matrix $\eta$ of the formula $\beta (s) =Ls+\eta$ defines
a cohomology class $[\eta] \in H^3(Q, L)$ where $L$ is the
$Q$-module associated to the matrix $L$. Recall that in the work of
Browder-Pakianathan \cite{BrPa}, there is a cohomology class lying
in the Lie algebra cohomology $H^3 ({\cal L}, {\rm ad})$ which is
defined in a similar way and it is an obstruction class for lifting
$G$ uniformly twice. We obtain a similar theorem for uniform double
lifting of $2$-group extensions.

\begin{thm}
\label{thm:doublelifting} Let  $E: 0 \to V \to G\to W \to 0 $ be a
Bockstein closed $2$-power exact extension with extension class $q$.
Let $Q$ be the associated quadratic map and $L$ denote the
$Q$-module defined by $L$ in the equation $\beta (q)=Lq$. Then, the
extension $E$ has a uniform double lifting if and only if $[\eta]=0$
in $H^3(Q, L)$.
\end{thm}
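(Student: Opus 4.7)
The plan is to identify $[\eta]\in H^3(Q,L)$ as the obstruction to producing a second uniform lift, by tracking how $\beta$ behaves on cocycle representatives in the Eilenberg--Moore framework of Theorem \ref{thm:R=L}.

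First, I would unpack what a uniform double lifting means at the level of extension classes. By Bockstein closedness, the class $q \in H^2(W, V)$ admits a uniform lift $\tilde q \in H^2(W,\tilde V)$ with $\tilde V \iso (\ZZ/4)^n$, satisfying the analogue of $\beta(\tilde q) = L \tilde q$ with $L$ interpreted modulo $4$. This produces a single uniform lifting $\tilde E$ of $E$. A uniform double lifting exists exactly when $\tilde q$ admits a further lift modulo $8$ with the analogous Bockstein-closed identity one level up. The obstruction to finding such a second lift is, by a standard computation with the short exact coefficient sequence $0 \to V \to V\otimes \ZZ/8 \to V\otimes \ZZ/4 \to 0$, a $3$-cochain on $W$ with values in the $Q$-module $L$.

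Next, using the description $H^*(G,\FF_2) \iso A^*(Q)\otimes \FF_2[s_1,\dots,s_n]$ from Theorem \ref{thm:R=L}, I would match this obstruction with $\eta$ itself. The key point is that the classes $s_i$ carry the first nontrivial Bockstein data of $\tilde q$: they correspond to the cocycles on $G$ detecting how $q$ lifts to $\ZZ/4$, so the equation $\beta(s) = Ls + \eta$ is literally a cocycle-level record of the next-level Bockstein obstruction. Comparing with the explicit $C^*(Q,L)$ complex defined in Section \ref{sect:cohdefn}, the column $\eta$ lands naturally as a $3$-cocycle in $C^3(Q,L)$.

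The main obstacle is verifying that $[\eta]$ is a well-defined invariant of $E$, independent of the choice of generators $s_i$. If we replace $s$ by $s+c$ for some $c\in A^2(Q)^n$, the formula becomes $\beta(s+c) = L(s+c) + (\eta + \beta c - Lc)$, so $\eta$ is modified by $\beta c - Lc$. One must recognize this correction as a coboundary of $c$ in the complex $C^*(Q,L)$; this requires unwinding the differentials of Section \ref{sect:cohdefn} and comparing them with the Steenrod action on the edge of the Eilenberg--Moore spectral sequence used in Theorem \ref{thm:R=L}. Once this comparison is in place, both directions of the theorem follow: if $[\eta]=0$ we can adjust the $s_i$ so that $\beta(s)=Ls$ exactly, and the resulting refined cocycle data assembles into a uniform double lifting; conversely, any uniform double lifting supplies generators $s_i$ with $\beta(s)=Ls$, forcing $[\eta]=0$.
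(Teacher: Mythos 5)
Your overall architecture matches the paper's: realize $\eta=\beta(s)+Ls$ as the obstruction to the second lift, then observe that the allowed changes of the generators $s_i$ alter $\eta$ exactly by coboundaries $\delta(\xi)=\beta(\xi)+L\xi$ of the complex $C^*(Q,L)$, so a lifting exists for some admissible choice if and only if $[\eta]=0$. That final bookkeeping is precisely what the paper does. The genuine gap is in the step you call the ``key point,'' namely the identification of the lifting obstruction with $\beta(s)+Ls$. The coefficient sequence you propose, $0\to V\to V\otimes\ZZ/8\to V\otimes\ZZ/4\to 0$, is a sequence of trivial modules, and its connecting homomorphism is an ordinary (higher) Bockstein; it cannot produce the twist by $L$. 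The $Ls$ term arises exactly from the uniformity requirement: the kernel of the double lift is not $V\otimes\ZZ/4$ with trivial action but the $\ZZ/4W$-lattice $M$ on which the group acts by $w\mapsto I+2L(w)$ modulo $4$, and the obstruction to lifting the class $s\in H^2(G,V)$ of $E':0\to V\to\Gamma\to G\to 0$ through the nonsplit module extension $0\to V\to M\to V\to 0$ is precisely $\beta(s)+Ls$. The paper imports this as Theorem 4.6 of \cite{PaYa}; your sketch must either invoke that result or redo the computation with the twisted coefficients, and as written it would land on the untwisted obstruction and the ``matching with $\eta$'' would fail.

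Two smaller omissions: before one can speak of $[\eta]\in H^3(Q,L)$ one must check (a) that $L$ is actually a representation of $Q$, so that $C^*(Q,L)$ is a complex at all (this uses $\beta(L)+L^2=\sum_i T_iq_i$, which in turn needs regularity of the $q_i$), and (b) that $\eta$ is a cocycle, i.e.\ $\beta(\eta)+L\eta=0$ in $A^*(Q)$; the paper gets (b) in one line by applying $\beta$ to $\beta(s)=Ls+\eta$ and using $\beta(L)+L^2=0$ in $A^*(Q)$. Your well-definedness computation, $\eta\mapsto\eta+\delta(c)$ under $s\mapsto s+c$ with $c\in A^2(Q)^n$, is correct and coincides with the paper's (which has a typo placing $\xi_i$ in $H^1(W,\FF_2)$ rather than degree $2$), but note that it is not merely a well-definedness check: it is the substance of the ``if'' direction, since one must be free to re-choose $s$ within the family of extensions $E'$ restricting to $(\ZZ/4)^n$ on $V$.
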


Another result we have is a description of the second page of the
Bockstein spectral sequence in terms of the cohomology of Bockstein
closed quadratic maps for the case where the extension has a uniform
double lifting.

\begin{thm} Let  $E$, $G$, $Q$, and $L$ be as in Theorem \ref{thm:doublelifting}.
Assume that $E$ has a uniform double lifting. Then, the second page
of the Bockstein spectral sequence for $G$ is given by
$$B_2 ^*(G)=  \bigoplus _{i=0} ^{\infty} H^{*-2i} (Q, \Sym^i (L))$$
where $\Sym^i(L)$ denotes the symmetric $i$-th power of $L$.
\end{thm}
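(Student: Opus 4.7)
The strategy is to compute $B_2^*(G)$ as $H^*(H^*(G, \FF_2), \beta)$, using the explicit description of the mod-$2$ cohomology provided by Theorem \ref{thm:R=L}. The first reduction is to simplify the Bockstein. Since $E$ has a uniform double lifting, Theorem \ref{thm:doublelifting} gives $[\eta] = 0$ in $H^3(Q, L)$. By the construction of this cohomology (Section \ref{sect:cohdefn}), one can write $\eta = d\xi$ for some cochain $\xi \in A^2(Q) \otimes L$. Writing $\xi = (\xi_1, \dots, \xi_n)^T$ in components relative to a basis of $L$ and setting $s_i' := s_i + \xi_i$ produces new polynomial generators on which a direct calculation yields $\beta(s') = Ls'$. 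I may therefore assume $\eta = 0$.

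With $\eta = 0$, the Bockstein on $H^*(G, \FF_2) \iso A^*(Q) \otimes \FF_2[s_1, \dots, s_n]$ is determined by
\begin{equation*}
\beta(x_k) = x_k^2, \qquad \beta(s_j) = \sum_i L_{ij}\, s_i,
\end{equation*}
both of which preserve the grading by total degree in the $s_j$. Hence the Bockstein complex splits into subcomplexes:
\begin{equation*}
(B_1^*(G), \beta) \iso \bigoplus_{i \geq 0} \bigl(A^*(Q) \otimes \Sym^i \langle s_1, \dots, s_n \rangle,\ \beta|_i\bigr).
\end{equation*}
Identifying $\langle s_1, \dots, s_n\rangle$ with the $Q$-module $L$ determined by the matrix $L$, the $i$-th symmetric power $\Sym^i(L)$ inherits a canonical $Q$-module structure.

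The core of the proof is to match the $i$-th summand with the cochain complex computing $H^*(Q, \Sym^i(L))$ from Section \ref{sect:cohdefn}. The underlying graded vector space is $A^*(Q) \otimes \Sym^i(L)$ in both cases, and Leibniz applied to the formulas above produces a differential whose first term is the Bockstein on $A^*(Q)$ and whose second term is the action of $L$ on $\Sym^i(L)$ extended as a derivation. This should coincide on the nose with the coboundary defining $H^*(Q, \Sym^i(L))$. Taking cohomology of each summand, and accounting for the fact that each $s_j$ has cohomological degree $2$ (giving a total shift of $2i$ for the $i$-th summand), yields
\begin{equation*}
B_2^n(G) = \bigoplus_{i \geq 0} H^{n-2i}(Q, \Sym^i(L)).
\end{equation*}

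The hard part will be the identification in the last step: verifying that the Leibniz-generated differential on $A^*(Q) \otimes \Sym^i(L)$ is exactly the coboundary used in the definition of $H^*(Q, \Sym^i(L))$, and not a variant thereof. This is a compatibility check between the cochain complex of Section \ref{sect:cohdefn} and the subcomplex cut out by the $s$-degree filtration, and it requires working through how the $Q$-action of $L$ on itself lifts to symmetric powers and how the resulting derivations assemble into the coboundary formula. Once this identification is in place, the theorem follows by assembling the summands.
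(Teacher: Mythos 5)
Your proposal is correct and follows essentially the same route as the paper: reduce to $\beta(s)=Ls$ using the vanishing of $[\eta]$, split $(B_1^*(G),\beta)$ along the grading by $s$-degree, and identify the $i$-th summand with the complex computing $H^{*-2i}(Q,\Sym^i(L))$. The ``hard part'' you flag is not an issue in the paper because $\Sym^i(L)$ is there \emph{defined} as the $Q$-module whose $\rho_W$-matrix is the derivation (Leibniz) extension of $L$ to the $i$-th symmetric power, so the identification of differentials holds by construction.
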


In the $p$ odd case, the $B$-cohomology has been calculated in cases
by comparing it to $H^*(\g,U(\g)^*)$ where $U(\g)^*$ is the dual of
the universal enveloping algebra of $\g$ equipped with the dual
adjoint action, where $\g$ is an associated complex Lie algebra.
This fundamental object has played a role in string topology
(homology of free loop spaces) and is analogous to the (classical)
ring of modular forms, identified by Eichler-Shimura as
$H^*(SL_2(\mathbb{Z}), \text{Poly} (V))$ where $V$ is the complex
2-dimensional canonical representation of $SL_2(\mathbb{Z})$ (see
\cite{PaRo} for more details). In string topology contexts, this is
referred to as the Hodge decomposition and so the above can be
thought of as a Hodge decomposition for the quadratic form $Q$. It
describes the distribution of higher torsion in the integral
cohomology of the associated group $G$.

As in the case of Lie algebras, it is possible to give a suitable
definition of a universal enveloping algebra $U(Q)$ for a quadratic
map $Q$ so that the representations of $Q$ and representations of
the universal algebra $U(Q)$ can be identified in a natural way.
However it is not clear to us how to find an isomorphism between the
cohomology of the universal algebra $U(Q)$ and the cohomology of the
quadratic map $Q$. There is also the issue of finding analogies of
the theorems on universal algebras of Lie algebras such as the
Poincar{\' e}-Birkoff-Witt Theorem. We leave these as open problems.

The paper is organized as follows: In Section \ref{sect:categories},
we introduce the category of quadratic maps and show that it is
naturally equivalent to the category of extensions of certain type.
Then in Section \ref{sect:Bclosedmaps}, we give the definition of a
Bockstein closed quadratic map. The definition of cohomology of
Bockstein closed quadratic maps is given in Section
\ref{sect:cohdefn}. Sections \ref{sect:mod2cohomology},
\ref{section: EMSS}, and \ref{sect:proof of the main theorem} are
devoted to the mod-$2$ cohomology calculations using LHS- and
EM-spectral sequences and the calculation of Bocksteins of the
generators. In particular, Theorem \ref{thm:R=L} is proven in
Section \ref{sect:proof of the main theorem}. In Section
\ref{sect:uniformdoublelifting}, we discuss the obstructions for
uniform lifting and in Section \ref{sect:BockSpectralSeq}, we
explain the $E_2$-page of the Bockstein spectral sequence in terms
of the cohomology of Bockstein closed quadratic maps.

\section{Category of quadratic maps}
\label{sect:categories}

Let $E$ denote a central extension of the form $$E:0 \to V \to G \to
W\to 0$$ where $V$ and $W$ are elementary abelian $2$-groups.
Associated to $E$, there is a cohomology class $q \in H^2 (W, V)$.
Also associated to $E$ there is a quadratic map $Q: W \to V$ defined
by $Q(w)=(\hat w)^2$, where $\hat w$ denotes an element in $G$ that
lifts $w \in W$. Similarly, the commutator induces a symmetric
bilinear map $B: W \times W \to V$ defined by $B(x,y)=[\hat x, \hat
y]$ for $x,y\in W$ where $[g,h]=g^{-1}h^{-1}gh$ for $g,h\in G$. It
is easy to see that $B$ is the bilinear form associated to $Q$.

We have shown in \cite{PaYa} that the extension class $q$ and the
quadratic form $Q$ are closely related to each other. In particular,
we showed that we can take $q=[f]$ where $f$ is a bilinear factor
set $f: W \times W\to V$ satisfying the identity $f(w,w)=Q(w)$ for
all $w \in W$ (see \cite[Lemma 2.3]{PaYa}). We can write this
correspondence more explicitly by choosing a basis $\{ w_1,\dots,
w_m \}$ for $W$. Then,
\[f(w_i, w_j)=
\begin{cases}
B( w_i , w_j )  & \text{if \: \: $i < j$}, \\
Q(w_i)  &  \text{if \: \: $i=j$}, \\
0 \ \  & \text{if \: \: $i>j$}.
\end{cases}\]
This gives a very specific expression for $q$. Let $\{v_1,\dots,
v_n\}$ be a basis for $V$, and let $q_k$ be the $k$-th component of
$q$ with respect to this basis. Then, $$q_k =\sum _i Q_k (w_i) x_i
^2 + \sum _{i<j} B_k (w_i, w_j ) x_i x_j$$ where $\{ x_1,\dots ,
x_m\}$ is the dual basis of $\{ w_1,\dots, w_m \}$ and $Q_k$ and
$B_k$ denote the $k$-th components of $Q$ and $B$. This allows one
to prove the following:

\begin{pro}[Corollary 2.4 in \cite{PaYa}]
Given a quadratic map $Q: W \to V$, there is a unique (up to
equivalence) central extension $$E(Q): 0 \to V \to G(Q) \to W \to
0$$ with a bilinear factor set $f:W \times W \to V$ satisfying
$f(w,w)=Q(w)$ for all  $w \in W$.
\end{pro}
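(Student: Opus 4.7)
The plan is to handle existence and uniqueness separately, using the explicit formula for $f$ given just above the statement.

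For existence, I would pick a basis $\{w_1, \dots, w_m\}$ of $W$, define $f : W \times W \to V$ on basis pairs by the upper-triangular recipe in the excerpt, and extend bilinearly. Two things then need checking. First, the identity $f(w,w) = Q(w)$: for $w = \sum_i c_i w_i$ with $c_i \in \FF_2$, bilinearity gives
$$f(w,w) = \sum_i c_i^2 \, Q(w_i) + \sum_{i<j} c_i c_j \, B(w_i, w_j),$$
because the strictly lower-triangular terms vanish; since $c_i^2 = c_i$ in $\FF_2$, this is exactly the polarization expansion of $Q(w)$. Second, the $2$-cocycle condition $f(b,c) + f(a+b,c) + f(a,b+c) + f(a,b) = 0$ holds automatically for any bilinear $f$, as the four terms cancel in pairs. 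Then $G(Q) := V \times W$ equipped with $(v,w)(v',w') = (v+v'+f(w,w'),\, w+w')$ is a central extension of $W$ by $V$ whose squaring map is $Q$.

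For uniqueness, suppose $f$ and $f'$ are two bilinear factor sets both satisfying the diagonal condition, and set $g := f + f'$. Then $g$ is bilinear with $g(w,w) = 0$ for all $w$, and the task is to show $g$ is a coboundary, so that $[f] = [f']$ in $H^2(W, V)$ and the corresponding extensions are equivalent. From $g(x+y, x+y) = 0$ together with $g(x,x) = g(y,y) = 0$ one deduces $g(x,y) = g(y,x)$, so $g$ is symmetric with zero diagonal. The explicit trivialization is given by the $1$-cochain
$$h\Bigl(\sum_i c_i w_i\Bigr) = \sum_{i<j} c_i c_j \, g(w_i, w_j);$$
a direct expansion, using $c_i^2 = c_i$ and the symmetry of $g$, shows $(\delta h)(a,b) = h(a) + h(a+b) + h(b) = g(a,b)$, as needed.

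The main obstacle is the uniqueness half: existence is a bookkeeping check against the already-displayed formula for $f$, whereas uniqueness requires producing an explicit $1$-cochain $h$ that trivializes $g = f + f'$. Once the candidate $h$ above is guessed, essentially as the ``off-diagonal quadratic part'' of $g$ read off in the chosen basis, the verification that $\delta h = g$ reduces to a short calculation in $\FF_2$-arithmetic.
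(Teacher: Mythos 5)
Your argument is correct. Note that the paper does not actually prove this statement: it is imported from \cite{PaYa} (Corollary 2.4 there), and the text preceding it only records the basis-dependent upper-triangular formula for $f$ that you use in your existence step. That half of your proof therefore coincides with what the paper displays — the bilinear extension of the recipe $f(w_i,w_j)=B(w_i,w_j)$ for $i<j$, $Q(w_i)$ for $i=j$, $0$ for $i>j$, the polarization check that $f(w,w)=Q(w)$ using $c_i^2=c_i$, and the observation that bilinearity forces the cocycle identity in characteristic $2$. The uniqueness half is the part deferred entirely to \cite{PaYa}, and your argument for it is sound: the difference $g=f+f'$ of two such factor sets is bilinear with $g(w,w)=0$, hence symmetric by polarization, and your cochain $h\bigl(\sum_i c_i w_i\bigr)=\sum_{i<j}c_ic_j\,g(w_i,w_j)$ satisfies $(\delta h)(a,b)=\sum_{i<j}(a_ib_j+a_jb_i)\,g(w_i,w_j)=g(a,b)$, the last equality using the symmetry of $g$ and the vanishing of its diagonal terms. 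This is the standard fact that an alternating bilinear form over $\FF_2$ is the coboundary of its off-diagonal quadratic part, and it correctly yields that the two extensions are equivalent, completing the proof. The coboundary convention you use, $(\delta h)(a,b)=h(a)+h(a+b)+h(b)$, matches the one the paper employs in its discussion of the homomorphism $f_G$, so the verification splices consistently into the surrounding material.
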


This gives a bijective correspondence between quadratic maps $Q: W
\to V$ and the central extensions of the form $E : 0 \to V \to G \to
W \to 0$. We will now define the category of quadratic maps and the
category of group extensions of the above type and then prove that
the correspondence described above indeed gives a natural
equivalence between these categories.

\subsection{Equivalence of categories}
\label{subsect:equivalence}

The category  of quadratic maps $\bf Quad$ is defined as the
category whose objects are quadratic maps $Q : W \to V$ where $W$
and $V$ are vector spaces over $\FF _2$. For quadratic maps $Q_1,
Q_2$, a morphism $f : Q_1 \to Q_2$ is defined as a pair of linear
transformations $f=(f_W, f_V)$ such that the following diagram
commutes:
\begin{equation}\label{eqn:quadmorp}
\begin{CD}
W_1 @>f_W>> W_2 \\
@VVQ_1V @VVQ_2V \\
V_1 @>f_V>>\  V_2\, .
\end{CD}
\end{equation}
The composition of morphisms $f=(f_W, f_V)$ and $g=(g_W, g_V)$ is
defined by coordinate-wise compositions. The identity morphism is
the pair $(\id_W, \id_V)$. Two quadratic maps $Q_1$ and $Q_2$ are
isomorphic if there are morphisms $f: Q_1 \to Q_2$ and $g : Q_2 \to
Q_1 $ such that $f\circ g= \id_{Q_2}$ and $g \circ f =
\id_{Q_1}$.

The category $\bf Ext$ is defined as the category whose objects are
the equivalence classes of extensions of type
$$E: 0 \to V \to G \to W \to 0 $$
where $V$ and $W$ are vector spaces over $\FF _2$, and the morphisms
are given by a commuting diagram as follows:
\begin{equation}
\label{eqn:extmorp}
\begin{CD}
E_1: @. \ \ \  0 @>>> V_1 @>>> G_1 @>>> W_1 @>>> 0 \\
@VVfV @. @VVf_{V}V @VVf_GV @VVf_WV @. \\
E_2: @. \ \ \  0 @>>> V_2 @>>> G_2 @>>> W_2 @>>>\ 0\, . \\
\end{CD}
\end{equation}
Note that two extensions are considered equivalent if there is a
diagram as above with $f_W=\id_W$ and $f_V=\id_V$. All such
morphisms are taken to be equal to identity morphism in our
category. More generally, two morphisms $f,g: E_1 \to E_2$ will be
considered equal in $\bf Ext$ if $f_V=g_V$ and $f_W=g_W$.

From the discussion at the beginning of this section, it is clear
that the assignments $\Phi : Q \to E(Q)$ and $\Psi: E \to Q_E$ give
a bijective correspondence between the objects of $\bf Quad$ and
$\bf Ext$. We just need to extend this correspondence to a
correspondence between morphisms. Given a morphism $f:E_1 \to E_2$,
we take $\Psi (f)$ to be the pair $(f_V, f_W):Q_1 \to Q_2$. By
commutativity of the diagram (\ref{eqn:extmorp}), it is easy to see
that $Q_2(f_W (w))=f_V(Q_1(w))$ holds for all $w \in W_1$. To define
the image of a morphism $f: Q_1 \to Q_2$ under $\Phi$, we need to
define a group homomorphism $f_G : G(Q_1) \to G(Q_2)$ which makes
the diagram given in (\ref{eqn:extmorp}) commute. Note that once $f_G$ is defined, we can define the morphism $\Phi (f): E_1
\to E_2$ as a sequence of maps $(f_V, f_G, f_W)$ as in 
diagram (\ref{eqn:extmorp}). It is clear that the composition $\Psi \circ \Phi$ is equal to the identity transformation. The composition $\Phi \circ \Psi$ is also equal to the identity in $\bf Ext$ although it may not be equal to identity on the middle map $f_G$. This follows from the fact that two morphisms $f,g: E_1 \to E_2$ between two extensions are equal in $\bf Ext$ if $f_V=g_V$ and $f_W=g_W$.

To define a group homomorphism $f_G : G(Q_1) \to G(Q_2)$ which makes the diagram given in (\ref{eqn:extmorp}) commute, first recall
that for $i=1,2$, we can take $G(Q_i)$ as the set $V_i\times W_i$ with
multiplication given by $(v,w)(v',w')=(v+v'+f_i(w,w'), w+w')$ where
$f_i:W_i\times W_i \to V_i$ is a bilinear factor set satisfying $f_i (w,w)=Q_i(w)$ for every $w\in W_i$ (see \cite[Lem. 2.3]{PaYa}). Note that the choice of the factor set is not unique and if $f_i$ and $f_i'$ are two factor sets for $E(Q_i)$ satisfying $f_i (w,w)=f_i' (w,w)=Q_i(w)$ for all $w\in W_i$, then $f_i+f'_i= \delta(t)$ is a boundary in the bar resolution. When we apply this to the extension associated to the quadratic form $Q_2 f_W=f_V Q_1: W_1 \to V_2$, we see that there is a function $t: W_1 \to V_2$ such that
\begin{equation}
\label{eqn:grouphomidentity}
(\delta t) (w,w')=t(w')+t(w+w')+t(w)= f_2 (f_W (w),f_W(w'))+f_V (f_1(w,w'))\end{equation}
for all $w,w'\in W_1$. We define $f_G : G(Q_1)\to G(Q_2)$ by $f_G (v,w)=(f_V(v)+t(w) , f_W (w))$ for all $v\in V_1$ and $w\in W_1$.
To check that $f_G$ is a group homomorphism, we need to show that $$f_G \bigl ( (v,w)(v',w') \bigl )=f_G (v,w)f_G (v',w')$$
holds for all $v,v'\in V_1$ and $w,w'\in W_1$. Writing this out in detail, one sees that this equation is equivalent to Equation \ref{eqn:grouphomidentity}, hence it holds.
So, we obtain a group homomorphism $f_G :G(Q_1) \to G(Q_2)$ as desired.  We conclude the following:

\begin{pro}
\label{pro:equivalence} The categories $\bf Quad$ and $\bf Ext$ are
equivalent.
\end{pro}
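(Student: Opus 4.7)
The plan is to establish mutually inverse functors $\Phi: \mathbf{Quad} \to \mathbf{Ext}$ and $\Psi: \mathbf{Ext} \to \mathbf{Quad}$. On objects, the assignments $\Phi(Q)=E(Q)$ and $\Psi(E)=Q_E$ already give a bijection by the discussion at the start of this section and the cited Corollary 2.4 of \cite{PaYa}. On morphisms, $\Psi$ is the easier direction: given a morphism of extensions as in diagram (\ref{eqn:extmorp}), the commutativity of the middle square, together with the fact that squaring elements of $G_i$ descends to the quadratic map $Q_i$ on $W_i$, immediately yields $Q_2\circ f_W=f_V\circ Q_1$, so $(f_V,f_W)$ is a legitimate morphism in $\mathbf{Quad}$; functoriality is clear from coordinate-wise composition.

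The substantive work is constructing $\Phi$ on morphisms. Given $(f_W,f_V): Q_1\to Q_2$, I would realize each $G(Q_i)$ concretely as $V_i\times W_i$ with product determined by a bilinear factor set $f_i$ satisfying $f_i(w,w)=Q_i(w)$, using \cite[Lemma 2.3]{PaYa}. The natural candidate is $f_G(v,w)=(f_V(v)+t(w),\,f_W(w))$ for some correction term $t:W_1\to V_2$; checking that $f_G$ is a group homomorphism reduces, after direct expansion of $f_G((v,w)(v',w'))=f_G(v,w)f_G(v',w')$, to the coboundary equation
$$\delta t(w,w')=t(w)+t(w+w')+t(w')=f_2\bigl(f_W(w),f_W(w')\bigr)+f_V\bigl(f_1(w,w')\bigr).$$
The main obstacle is to verify that the right-hand side is indeed a $2$-coboundary in the bar resolution of $W_1$ with coefficients in $V_2$. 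This is where the identity $Q_2\circ f_W=f_V\circ Q_1$ is used: both $f_2\circ(f_W\times f_W)$ and $f_V\circ f_1$ are bilinear factor sets whose diagonal agrees with the quadratic map $Q_2 f_W=f_V Q_1: W_1\to V_2$, so by the uniqueness assertion of Corollary 2.4 in \cite{PaYa} they define equivalent central extensions and therefore differ by a coboundary $\delta t$. This produces the required $t$ and hence a group homomorphism $f_G$; setting $\Phi(f)=(f_V,f_G,f_W)$ makes the diagram (\ref{eqn:extmorp}) commute by construction.

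Finally, I would check that $\Phi$ and $\Psi$ are mutually inverse. The composite $\Psi\circ\Phi$ is the identity on $\mathbf{Quad}$ directly from the construction, since extracting $(f_V,f_W)$ from $(f_V,f_G,f_W)$ recovers the original morphism. The composite $\Phi\circ\Psi$ need not recover exactly the same middle map $f_G$, because different choices of the correction term $t$ produce different lifts; however, by the convention that morphisms in $\mathbf{Ext}$ are identified once they agree on $f_V$ and $f_W$, this composite equals the identity in $\mathbf{Ext}$. This last identification is precisely what makes the two categories equivalent rather than merely isomorphic and is what allows the ambiguity in the choice of $t$ to disappear.
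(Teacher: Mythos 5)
Your proposal is correct and follows essentially the same route as the paper: the same concrete model $G(Q_i)=V_i\times W_i$ with a bilinear factor set, the same candidate $f_G(v,w)=(f_V(v)+t(w),f_W(w))$ with $t$ obtained from the coboundary relating the two factor sets of the quadratic map $Q_2f_W=f_VQ_1$, and the same observation that $\Phi\circ\Psi$ is the identity only up to the convention identifying morphisms of $\mathbf{Ext}$ that agree on $f_V$ and $f_W$. No gaps.
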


An immediate consequence of this equivalence is the following:

\begin{cor}
\label{cor:pullbacks} Let $Q_1$ and $Q_2$ be quadratic maps with
extension classes $q_1 \in H^2(W_1, V_1)$ and $q_2 \in H^2(W_2,
V_2)$ respectively. If  $f:Q_1 \to Q_2$ is a morphism of quadratic
maps, then
$$(f_W) ^* (q_2)=(f_V)_* (q_1)$$
in $H^2(W_1, V_2)$.
\end{cor}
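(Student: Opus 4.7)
The plan is to read this off directly from the cochain-level construction already carried out in the proof of Proposition \ref{pro:equivalence}. Given the morphism $f=(f_V,f_W):Q_1\to Q_2$, I would choose bilinear factor sets $f_i:W_i\times W_i\to V_i$ with $f_i(w,w)=Q_i(w)$, so that $q_i=[f_i]$ in $H^2(W_i,V_i)$ for $i=1,2$, as permitted by \cite[Lem.~2.3]{PaYa}.

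At the cochain level in the bar resolution, the pullback class $(f_W)^*(q_2)$ is represented by the composite $2$-cocycle $f_2\circ(f_W\times f_W):W_1\times W_1\to V_2$, while the pushforward class $(f_V)_*(q_1)$ is represented by $f_V\circ f_1:W_1\times W_1\to V_2$. These are the standard cochain-level formulas for the induced maps on cohomology under restriction of the base along $f_W$ and change of coefficients along $f_V$, so the whole problem reduces to exhibiting a $1$-cochain $t:W_1\to V_2$ whose bar coboundary equals the sum $f_2\circ(f_W\times f_W)+f_V\circ f_1$.

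But such a $t$ has already been produced in the proof of Proposition \ref{pro:equivalence}: its defining identity \eqref{eqn:grouphomidentity} reads
\[
(\delta t)(w,w')=f_2\bigl(f_W(w),f_W(w')\bigr)+f_V\bigl(f_1(w,w')\bigr)
\]
for all $w,w'\in W_1$, which in characteristic $2$ is exactly the cohomological equality we need. Hence $(f_W)^*(q_2)=(f_V)_*(q_1)$ in $H^2(W_1,V_2)$. There is no real obstacle remaining; the substantive content—constructing $t$—was already dispatched while proving the equivalence of categories, and the corollary is essentially a cohomological restatement of the existence of the homomorphism $f_G$ lifting $(f_V,f_W)$ to a morphism of extensions.
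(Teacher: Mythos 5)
Your proof is correct. It takes a more direct, cochain-level route than the paper's. The paper factors $f$ in ${\bf Quad}$ as $Q_1 \to Q' \to Q_2$ through the intermediate quadratic map $Q' = f_V Q_1 = Q_2 f_W$, applies the equivalence of categories (Proposition \ref{pro:equivalence}) to obtain a corresponding two-step factorization in ${\bf Ext}$ with a middle extension $E'$, and then reads the equality off the resulting diagram: the top square (identity on $W_1$) identifies the class of $E'$ with $(f_V)_*(q_1)$, and the bottom square (identity on $V_2$) identifies it with $(f_W)^*(q_2)$. You instead bypass the diagram of extensions entirely: you represent the two sides by the explicit bar cocycles $f_2\circ(f_W\times f_W)$ and $f_V\circ f_1$ --- both bilinear factor sets restricting to $Q'$ on the diagonal --- and invoke the coboundary identity \eqref{eqn:grouphomidentity} to conclude they are cohomologous. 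The essential input is identical in both arguments, namely the existence of the $1$-cochain $t$, which in the paper is exactly what makes $f_G$ a group homomorphism; so the two proofs are really two packagings of one computation. Yours is somewhat more self-contained, needing only the cochain identity and the standard bar-resolution formulas for $(f_W)^*$ and $(f_V)_*$ rather than the full functor $\Phi$, while the paper's version makes the intermediate extension $E'$ visible as the geometric witness of the equality.
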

\begin{proof} Let $Q': W_1 \to V_2$ be the quadratic map defined
by $Q'=f_V Q_1=Q_2f_W$. Then, we have
$$
\begin{CD}
W_1 @>=>> W_1 @>f_W>> W_2 \\
@VVQ_1V @VVQ'V @VVQ_2V \\
V_1 @>f_V>> V_2 @>=>> V_2 \\
\end{CD}
$$
So, we find a factorization of $f$ in $\bf Quad$ as  $Q_1
\maprt{f_1} Q' \maprt{f_2} Q_2 $. By Proposition
\ref{pro:equivalence}, we obtain a factorization of the
corresponding morphism in $\bf Ext$, this gives the following
commuting diagram:
$$
\begin{CD}
E_1: @. \ \ \  0 @>>> V_1 @>>> G(Q_1) @>>> W_1 @>>> 0 \\
@. @. @VVf_{V}V @VV(f_1)_GV @| @. \\
E': @. \ \ \  0 @>>> V_2 @>>> \widetilde G @>>> W_1 @>>> 0 \\
@. @. @| @VV(f_2)_GV @VVf_WV @. \\
E_2: @. \ \ \  0 @>>> V_2 @>>> G(Q_2) @>>> W_2 @>>> 0 \\
\end{CD}
$$
Hence, we have $(f_W) ^* (q_2)=(f_V)_* (q_1)$ as desired.
\end{proof}

\subsection{Extensions and representations of quadratic maps}
\label{subsect:extensions}

We now introduce certain categorical notions for maps between
quadratic maps such as kernel and cokernel of a map and then give
the definition of extensions of quadratic maps.

The kernel of a morphism $f: Q_1 \to Q_2$ is defined as the
quadratic map $Q_1|_{\ker f_W} : \ker f_W \to \ker f_V .$ We denote
this quadratic map as $\ker f$. If $\ker f$ is the zero quadratic
map, i.e., the quadratic map from a zero vector space to zero vector
space, then we say $f$ is injective. Similarly, we define the image
of a quadratic map $f: Q_1 \to Q_2 $ as the quadratic map $Q_2
|_{\Img f_W} : \Img f_W \to \Img f_V.$ We denote this quadratic map
by $\Img f$ and say $f$ is surjective if $\Img f=Q_2$. Given an
injective map $f : Q_1 \to Q_2 $, we say $f$ is a {\it normal
embedding} if $$B_2 (f_W(w_1), w_2 ) \in \Img f_V$$ for all $w_1 \in
W_1 $ and $w_2 \in W_2 $. Given a normal embedding $f : Q_1 \to Q_2
$, we can define the cokernel of $f$ as the quadratic map $\coker f
: \coker f_W \to \coker f_V $ by the formula $$(\coker f)( w_2 +
\Img f_W ) = Q_2 (w_2 ) + \Img f_V .$$ We are now ready to define an
extension of two quadratic maps.

\begin{defn} We say a sequence of quadratic maps of the form
\begin{equation}
\label{eqn:extQuad} {\cal E} : 0 \to Q_1 \maprt{f} Q_2 \maprt{g} Q_3
\to 0
\end{equation}
is an extension of quadratic maps if $f$ is injective, $g$ is
surjective, and $\Img f=\ker g$.
\end{defn}

Note that in an extension ${\cal E}$ as above, the first map $f:
Q_1\to Q_2$ is a normal embedding because we have $$B_2({\rm Im} f_W, W_2)=B_2(\ker g_W,
W_2) \subseteq \ker g_V= {\rm Im} f_V.$$ We say the extension ${\cal
E}$ is a split extension if there is a morphism of quadratic maps $s
: Q_3 \to Q_2 $ such that $g \circ s = \id _{Q_3}$. In this case we
write $Q_3 \iso Q_1 \rtimes Q_2$.

Later in this paper we consider the extensions where $Q_1$ is just
the identity map  $\id_U: U \to U$ of a vector space $U$. In this
case, we denote the extension by $${\cal E} : 0\to U \maprt{i}
\widetilde Q \maprt{\pi} Q \to 0,$$ and say $\cal E$ is an extension
of $Q$ with an abelian kernel $U$. In Section \ref{sect:cohdefn}, we
define obstructions for splitting such extensions and also give a
classification theorem for such extensions in a subcategory of ${\bf
Quad}$ where all the quadratic maps are assumed to be Bockstein
closed.

Given an extension of quadratic map $Q$ with an abelian kernel $U$,
there is an action of $Q$ on $U$ induced from the bilinear form
$\widetilde B$ associated to $\widetilde Q$. This action is defined
as a homomorphism
$$ \rho_W : W \to \Hom (U , U)$$
which satisfies $i_V \bigl (\rho_W (w) (u) \bigr )= \widetilde B(i_W(u), \overline w) $ where $\overline w$ is vector in $\widetilde
W$ such that $\pi_W (\overline w)=w$.

In the definition of the representation of a quadratic map, we need
the following family of quadratic maps: Let $U$ be a vector space.
We define
$$ Q_{\gl (U)} : \End(U) \to \End(U)$$
to be the quadratic map such that $ Q_{\gl(U)} (A)=A^2 +A$  for all
$A\in \End (U)$. (See Example 2.5 in \cite{PaYa}.)

\begin{defn}
A \emph{representation} of a quadratic form $Q$ is defined as a
morphism
$$\rho : Q \to Q_{\gl(U)}$$
in the category of quadratic maps. In other words, a representation
is a pair of maps $\rho=(\rho _W,\rho_V )$ such that the following
diagram commutes
$$
\begin{CD}
W @>\rho_W>> \End (U) \\
@VVQV @VVQ_{\gl(U)}V \\
V @>\rho_V>> \End(U). \\
\end{CD}
$$
\end{defn}
If $U$ is a $k$-dimensional vector space, then we say $\rho$ is a
$k$-dimensional representation of $Q$. Given a representation as
above, we sometimes say $U$ is a $Q$-module to express the fact that
there is an action of $Q$ on $\id _U: U \to U$ via the
representation $\rho$.

\section{Bockstein closed quadratic maps}
\label{sect:Bclosedmaps}

Let $E(Q)$ be a central extension of the form $0 \to V \to G(Q) \to
W\to 0$ associated to a quadratic map $Q:W \to V$, where $V$ and $W$
are $\FF _2$-vector spaces. Let $q \in H^2 (W, V)$ denote the
extension class of $E$. Choosing a basis $\{v_1,\dots , v_n \} $ for
$V$, we can write $q$ as a tuple $$q=(q_1,\dots, q_n)$$ where $q_i
\in H^2 (W, \FF _2 )$ for all $i$. The elements $\{ q_i \}$ generate
an ideal $I(Q)$ in the cohomology algebra $H^* (W, \FF _2)$. It is
easy to see that the ideal $I(Q)$ is independent of the basis chosen
for $V$, and hence is completely determined by $Q$.

\begin{defn}
\label{def:Bockstein closed} We say $Q:W \to V$ is {\it Bockstein
closed} if $I(Q)$ is invariant under the Bockstein operator on
$H^*(W;\FF _2)$. A central extension $E(Q): 0 \to V \to G(Q) \to W
\to 0$ is called Bockstein closed if the associated quadratic map
$Q$ is Bockstein closed.
\end{defn}

The following was proven in \cite{PaYa} as Proposition 3.3.

\begin{pro}
\label{pro:def of L} Let $Q: W \to V$ be a quadratic map, and let $q
\in H^2(W, V)$ be the corresponding extension class. Then, $Q$ is
Bockstein closed if and only if there is a cohomology class $L \in
H^1 (W, \End (V))$ such that $\beta (q)=Lq$.
\end{pro}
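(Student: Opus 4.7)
The plan is to unwind the definition and observe that the equivalence essentially amounts to a bookkeeping argument: we translate the statement that the ideal $I(Q)$ is Bockstein-invariant into a matrix identity in $H^*(W,\FF_2)$, and then repackage this matrix as a single cohomology class in $H^1(W,\End(V))$.

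First I would reduce the Bockstein closed condition to a statement on the generators. Since $\beta=Sq^1$ is a derivation on $H^*(W,\FF_2)$, the ideal $I(Q)=(q_1,\dots,q_n)$ is closed under $\beta$ if and only if $\beta(q_i)\in I(Q)$ for every $i$. Indeed, any element of $I(Q)$ can be written as $\sum r_i q_i$ with $r_i \in H^*(W,\FF_2)$, and the derivation property gives $\beta(\sum r_i q_i)=\sum \beta(r_i)q_i +\sum r_i \beta(q_i)$, which lies in $I(Q)$ as soon as each $\beta(q_i)$ does.

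Next I would analyze what it means for a degree $3$ element to lie in $I(Q)$. Since each $q_i$ sits in degree $2$, a general element of $I(Q)\cap H^3(W,\FF_2)$ must be of the form $\sum_j l_{ij} q_j$ with $l_{ij}\in H^1(W,\FF_2)$ (the homogeneous pieces of $r_i$ of degree other than $1$ contribute to other degrees). Thus $Q$ is Bockstein closed precisely when, for each $i$, there exist elements $l_{ij}\in H^1(W,\FF_2)$ with
\begin{equation*}
\beta(q_i)=\sum_{j=1}^n l_{ij} q_j.
\end{equation*}

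Finally I would repackage this data as a single class $L\in H^1(W,\End(V))$. Using the basis $\{v_1,\dots,v_n\}$ of $V$ dual to the decomposition $q=(q_1,\dots,q_n)$, the matrix $(l_{ij})$ of elements of $H^1(W,\FF_2)$ corresponds under the canonical isomorphism $H^1(W,\End(V))\cong H^1(W,\FF_2)\otimes \End(V)$ to a class $L\in H^1(W,\End(V))$; the action of $L$ on $q$ (viewed as the column vector of $q_i$'s in $H^2(W,V)$) is exactly matrix multiplication, and the equation $\beta(q)=Lq$ encodes the $n$ relations above simultaneously. Conversely, given such an $L$, reading the equation componentwise shows each $\beta(q_i)\in I(Q)$, so $I(Q)$ is Bockstein closed. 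There is no real obstacle; the only point to be careful about is verifying that the matrix-versus-class translation $L\leftrightarrow (l_{ij})$ is basis-independent, which follows because a change of basis of $V$ conjugates both $L$ and the column vector $q$ compatibly, leaving the equation $\beta(q)=Lq$ invariant in form.
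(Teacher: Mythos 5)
Your argument is correct and complete. Note that the paper itself does not prove this proposition --- it simply cites \cite{PaYa}, Proposition 3.3 --- but your unwinding is exactly the standard argument one would supply there: reduce Bockstein-closedness of $I(Q)$ to the conditions $\beta(q_i)\in I(Q)$ using that $\beta=Sq^1$ is a derivation on $H^*(W,\FF_2)$, observe that the degree-$3$ part of the ideal is precisely $\sum_j H^1(W,\FF_2)\cdot q_j$ since the $q_j$ are homogeneous of degree $2$, and package the resulting matrix $(l_{ij})$ of linear forms as a class $L\in H^1(W,\End(V))\cong H^1(W,\FF_2)\otimes\End(V)$ acting on the column vector $q$ by matrix multiplication.
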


We often choose a basis for $W$ and $V$ ($\dim W=m$ and $\dim V=n$)
and express the formula $\beta (q)=Lq$ as a matrix equation. From
now on, let us assume $W$ and $V$ have some fixed basis and let $\{
x_1,\dots, x_m\}$ be the dual basis for $W$. Then, each component
$q_k$ is a quadratic polynomial in variables $ x_i $ and $L$ is an
$n\times n$ matrix with entries given by linear polynomials in
$x_i$'s. If we express $q$ as a column matrix whose $i$-th entry is
$q_i$, then $\beta(q)=Lq$ makes sense as a matrix formula where $Lq$
denotes the matrix multiplication. In general, we can have different
matrices, say $L_1$ and $L_2$, such that $\beta (q)=L_1 q =L_2 q$.
It is known that $L$ is unique when $E$ is $2$-power exact. (See
Proposition 8.1 in \cite{PaYa}.)

\begin{ex}\label{ex:gl induced}
Let $G$ be the kernel of the mod $2$ reduction map $GL_n (\ZZ /8)
\to GL _n (\ZZ /2 )$. It is easy to see that $G$ fits into a central
short exact sequence
$$0 \to \gl _n (\FF _2) \to G \to \gl _n (\FF _2) \to 0 $$
with associated quadratic map $Q_{\gl_n}$ where  $\gl_n (\FF_2)$ is the vector space of $n\times n$ matrices with entries in $\FF_2$ and the quadratic map $Q_{\gl _n}: \gl_n (\FF _2 ) \to \gl_n (\FF _2)$ is defined by $Q_{\gl _n } (\mathbb{A} )=\mathbb{A}^2 +\mathbb{A}$ (see \cite[Ex. 2.5 and 2.6]{PaYa} for more details).
We showed in \cite[Cor. 3.9]{PaYa} that this extension and
its restrictions to suitable subspaces such as $\mathfrak{sl} _n
(\FF _2)$ or $\mathfrak{u} _n (\FF _2)$ are Bockstein closed. Here
$\mathfrak{sl} _n (\FF _2 )$ denotes the subspace of $\gl_n(\FF_2)$
formed by matrices of trace zero and $\mathfrak{u} _n (\FF _2 )$
denotes the subspace of strictly upper triangular matrices. Note
that the extension for $\mathfrak{u} _n (\FF _2 )$ is also a
$2$-power exact extension (see \cite[Ex. 9.6]{PaYa}).
\end{ex}

We now consider the question of when an extension of two Bockstein
closed quadratic maps is also Bockstein closed. The equations that
we find in the process of answering this question  will give us the
motivation for the definition of the cohomology of Bockstein closed
quadratic maps.

Let $${\cal E}: 0 \to U \maprt{i} \widetilde{Q} \maprt{\pi} Q \to 0
$$ be an extension of the quadratic map $Q$ with abelian kernel $U$.
We can express this as a diagram of quadratic maps as follows:
$$
\begin{CD}
U @>i_W>> U\oplus W @>\pi_W>> W \\
@VV{\id}_UV @VV\widetilde QV @VVQV \\
U @>i_V>> U\oplus V @>\pi_V>> \ V. \\
\end{CD}
$$
Note that we have $\widetilde{Q} (u,0)=(u,0)$ and $\pi_V
\widetilde{Q} (0,w)=Q(w)$ for all $u \in U$ and $w\in W$. Also,
there is an action of $Q$ on $U$ given by the linear map $\rho _W: W
\to \End(U)$ defined by the equation
$$i_V( \rho_W(w) u)= \widetilde{B} ((u,0),(0,w)).$$
Hence, we can write
$$ \widetilde Q (u,w)=(u+\rho_W(w)u+f(w), Q(w))$$
where $f : W \to U$ is a quadratic map called factor set. We will
study the conditions on $f$ and $\rho_W$ which make $\widetilde Q$ a
Bockstein closed quadratic map.

Let $k=\dim U$. Choose a basis for $U$, and let $\{z_1,\dots ,
z_k\}$ be the associated dual basis for $U^*$. Then, we can express
the extension class $\tilde q$ of $\widetilde Q$ as a column matrix
$$\tilde q = \left[\begin{matrix} q \cr \beta(z)+Rz+f \cr
\end{matrix}\right]
$$
where $f$ and $q$ denote the column matrices for the quadratic maps
$f: W \to U$ and $Q: W \to V$ respectively. Here $z$ is the column
matrix with $i$-th entry equal to $z_i$ and $R$ is a $k\times k$
matrix with entries in $x_i$'s which is associated to $\rho _W : W
\to \End(U)$. Applying the Bockstein operator, we get
$$\beta(\tilde q) = \left[\begin{matrix} \beta(q) \cr \beta(R)z+
R\beta(z)+\beta(f) \cr
\end{matrix}\right].$$
Note that $\widetilde Q$ is Bockstein closed if we can find
$\widetilde L$ such that $\beta (\tilde q)=\widetilde L \tilde q$.
Since $\beta (q)=Lq$ for some $L$, we can take $\widetilde L$ as
$$\widetilde L = \left[\begin{matrix} L & 0  \cr L_{2,1} & L_{2,2} \cr
\end{matrix}\right].
$$
Note that assuming the top part of the matrix $\widetilde L$ is in a
special form does not affect the generality of the lower part. So,
under the assumption that $Q$ is Bockstein closed, the quadratic map
$\widetilde Q$ is Bockstein closed if only if there exist $L_{2,1}$
and $L_{2,2}$ satisfying
\begin{equation}\label{eqn:requirement}
\beta(R)z+R\beta(z)+\beta(f)= L_{2,1}q+L_{2,2} (\beta (z)+Rz+f).
\end{equation}
We have $$L_{2,2}= \sum _{i=1} ^k L_{2,2} ^{(i)} z_i + \sum _{j=1}
^m L_{2,2} ^{(j)}x_j$$ where $L_{2,2}^{(i)}$ and $L_{2,2}^{(j)}$ are
scalar matrices, so we can write $L_{2,2}=L_{2,2}^z+L_{2,2} ^x$
where $L_{2,2}^z$ is the first sum and $L_{2,2}^x $ is the second
sum in the above formula.

Equation $(\ref{eqn:requirement})$ gives $L_{2,2}^z \beta(z)=0$
which implies $L_{2,2}^z=0$. Writing $L_{2,2}=L_{2,2}^x$ in
$(\ref{eqn:requirement})$, we easily see that we must have
$L_{2,2}=R$. Putting this into $( \ref{eqn:requirement})$, we get
\begin{equation}
\label{eqn:obstruction} [ \beta(R)+R^2]z+[\beta (f)+Rf]=L_{2,1}q.
\end{equation}

As we did for $L_{2,2}$, we can write $L_{2,1}$ also as a sum
$L_{2,1}=L_{2,1}^z +L_{2,1}^x$ where the entries of $L_{2,1}^z$ are
linear polynomials in $z_i$'s and the entries of $L_{2,1}^x$ are
linear polynomials in $x_i$'s. So, Equation \ref{eqn:obstruction}
gives two equations:
\begin{equation}
\label{eqn:twoequations}
\begin{split}
[\beta(R)+R^2 ]z&=L_{2,1}^z q \\
\beta (f)+Rf&=L_{2,1}^x q. \\
\end{split}
\end{equation}

From now on, let us write $Z=L_{2,1}^z$. Note that $Z$ is a $k\times
n$ matrix ($k=\dim U$ and $n=\dim V$) with entries in the dual space
$U^*$, so it can be thought of as a linear operator $Z: U \to
\Hom(V,U).$ Viewing this as a bilinear map $U \times V \to U$, and
then using an adjoint trick, we obtain a linear map $\rho_V: V \to
\text{Hom}(U,U)=\text{End}(U).$ As a matrix, let us denote $\rho_V$
by $T$. The relation between $Z$ and $T$ can be explained as
follows: If $\{ u_1,\dots, u_k\}$ are basis elements for $U$ dual to
the basis elements $\{z_1,\dots, z_k\}$ of $U^*$ and if  $\{
v_1,\dots, v_k\}$ is the  basis for $V$ dual to the basis elements
$\{t_1,\dots, t_k\}$ of $V^*$, then $Z(u_i)(v_j)=T(v_j)(u_i)$ for
all $i,j$. So, if $Z=\sum _{i=1} ^k Z(i) z_i$ and $T=\sum _{j=1}^n
T(j)t_j$, then we have $Z(i)e_j=T(j)e_i$ where $e_i$ and $e_j$ are
$i$-th and $j$-th unit column matrices. This implies, in particular,
that
$$ Zq= T(q)z$$
where $T(q)$ is the matrix obtained from $T$ by replacing $t_i$'s
with $q_i$'s. So, the first equation in (\ref{eqn:twoequations}) can
be interpreted as follows:

\begin{lem}\label{lem:repeqn}  Let $\rho _W: W \to \End(U) $ and
$\rho _V : V \to \End (U)$ be two linear maps with corresponding
matrices $R$ and $T$. Let $Z$ denote the matrix for the adjoint of
$\rho_V$ in  $\Hom (U, \Hom (V,U))$. Then, the equation
$$[\beta(R)+R^2]z=Zq$$ holds if and only if $\rho= (\rho_W , \rho
_V):Q \to Q_{\mathfrak{gl}(U)}$ is a representation.
\end{lem}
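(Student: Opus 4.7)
My plan is to reduce the displayed equation to an identity of matrix polynomials and then recognize that identity as the defining condition for a representation. First, I would invoke the relation $Zq=T(q)z$ derived in the paragraph immediately preceding the lemma: since the $z_i$ are indeterminates algebraically independent from the $x_i$, peeling off the column $z$ turns the given equation into the matrix identity
$$\beta(R)+R^2=T(q),$$
where both sides are $k\times k$ matrices whose entries are homogeneous polynomials of degree two in the $x_i$ alone.

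The bulk of the proof is then a coefficient comparison. Writing $R=\sum_i R_i x_i$ with $R_i=\rho_W(w_i)$, the left side expands, using $\beta(x_i)=x_i^2$, into a sum of $x_i^2$-terms with coefficient $R_i+R_i^2$ and $x_ix_j$-terms (for $i<j$) with coefficient $R_iR_j+R_jR_i$. For the right side, I would substitute the explicit form $q_j=\sum_i Q_j(w_i)x_i^2+\sum_{i<\ell}B_j(w_i,w_\ell)x_ix_\ell$ recalled in Section~\ref{sect:categories}, so that the coefficient of $x_i^2$ in $T(q)=\sum_j T_j q_j$ is $\rho_V(Q(w_i))$ and the coefficient of $x_ix_j$ is $\rho_V(B(w_i,w_j))$. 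Matching terms produces the two relations $R_i^2+R_i=\rho_V(Q(w_i))$ and $R_iR_j+R_jR_i=\rho_V(B(w_i,w_j))$.

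Finally, I would show that this pair of relations is equivalent to the single representation identity $\rho_V(Q(w))=\rho_W(w)^2+\rho_W(w)$ on all of $W$. Specializing the identity to $w=w_i$ yields the first relation; specializing to $w=w_i+w_j$ for $i<j$ and canceling the diagonal contributions via the quadratic expansion of $Q$ and of $\rho_W(\cdot)^2$ yields the second. Conversely, given both relations, the required identity for a general $w=\sum c_iw_i$ follows by linearly expanding both sides and using $c_i^2=c_i$ over $\FF_2$; this is precisely the statement that $\rho=(\rho_W,\rho_V)$ defines a morphism $Q\to Q_{\gl(U)}$ in $\bf Quad$.

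The main point requiring care is the very first step: one must check that the $z_i$ can legitimately be treated as free indeterminates independent of the $x_i$, so that $[\beta(R)+R^2]z=T(q)z$ forces equality of the matrix coefficients. This hinges on the fact that $\beta(R)+R^2$ and $T(q)$ both have entries in $\FF_2[x_1,\dots,x_m]$, which is read off directly from the definitions of $R$ and $T$. The remaining coefficient comparison and polarization arguments are routine.
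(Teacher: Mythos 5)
Your proposal is correct and follows essentially the same route as the paper: both reduce the displayed equation to the matrix identity $\beta(R)+R^2=T(q)$ via the relation $Zq=T(q)z$ and then identify that identity with the commutativity of the representation diagram. The only difference is that the paper asserts this last equivalence in one line, whereas you verify it explicitly by comparing coefficients of $x_i^2$ and $x_ix_j$ and polarizing, which is a legitimate (and welcome) filling-in of detail rather than a different argument.
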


\begin{proof} Note that the diagram
$$
\begin{CD}
W @>\rho_W>> \End (U) \\
@VVQV @VVQ_{\gl(U)}V \\
V @>\rho_V>> \End(U) \\
\end{CD}
$$
commutes if and only if $$\beta(R)+R^2= T(q)$$ where $T(q)$ is the
$k\times k$ matrix obtained from $T$ by replacing $t_i$'s with
$q_i$'s. We showed above that $Zq=T(q)z$, so $\beta(R)+R^2= T(q)$
holds if and only if
$$[\beta(R)+R^2]z= T(q)z=Zq.$$ This completes the proof.
\end{proof}

As a consequence of the above lemma, we can conclude that if the
action of $W$ on $\End(U)$ comes from a representation $\rho : Q \to
Q_{ \gl(U)}$, then the only obstruction for a quadratic map
$\widetilde Q $ to be Bockstein closed is the second equation $$
\beta(f)+Rf=L_{2,1}^x q$$ given in (\ref{eqn:twoequations}). Note
that both $R$ and $L_{2,1}^x$ are matrices with entries in $x_i$'s,
so this equation can be interpreted as saying that $\beta(f)+Rf= 0$
in $A^*(Q)=\FF _2 [x_1, \dots , x_m]/(q_1,\dots , q_n )$. In the next section, we define the cohomology of a
Bockstein closed quadratic map using this interpretation.

\section{Cohomology of Bockstein closed quadratic maps}
\label{sect:cohdefn}

Let $Q : W \to V$ be a Bockstein closed quadratic map where $W$ and
$V$ are $\FF _2$-vector spaces of dimensions $m$ and $n$,
respectively. Let $U$ be a $k$-dimensional $Q$-module with
associated representation $\rho : Q \to Q_{\gl (U)}$. We will define
the cohomology of $Q$ with coefficients in $U$ as the cohomology of
a cochain complex $C^* (Q, U)$. We now describe this cochain
complex.

Let $A(Q)^*$ denote the $\FF _2$-algebra
$$A^* (Q)=\FF _2[x_1, \dots , x_m ]/(q_1,\dots, q_n)$$
as before. The algebra $A^*(Q)$ is a graded algebra where the
grading comes from the usual grading of the polynomial algebra. We
define $p$-cochains of $Q$ with coefficients in $U$ as
$$C^p(Q, U)=A(Q)^p\otimes U.$$
We describe the differentials using a matrix formula. Choosing a
basis for $U$, we can express a $p$-cochain $f$ as a $k \times 1$
column matrix with entries $f_i \in A^p (Q)$. Let $R \in H^1(W,
\End(U))$ be the cohomology class associated to $\rho_W$. We can
express $R$ as a $k\times k$ matrix with entries in $x_i$'s. We
define the boundary maps
$$ \delta  : C^p (Q, U) \to C^{p+1} (Q, U)$$
by $$ \delta (f)= \beta(f)+Rf.$$ Note that
$$\delta ^2 (f) =
\beta(R)f+R\beta(f)+R\beta(f)+R^2f=[\beta(R)+R^2]f=0$$ in $A^*(Q)$
because $\beta(R)+R^2=T(q)$ by the argument given in the proof of
Lemma \ref{lem:repeqn}. So, $C^* (Q, U)$ with the above boundary
maps is a cochain complex.

Note that although the definition of $\delta$ only uses $R$, i.e.,
$\rho_W$, the existence of $\rho_V$ is needed to ensure that $\delta
^2= 0$. Thus both maps in the structure of $U$ as a $Q$-module play
a role in establishing $\delta$  as a differential. Also note that
we need the quadratic map $Q$ to be Bockstein closed for the
well-definedness of the differential $\delta$.

\begin{defn} The cohomology of a Bockstein closed quadratic form
$Q$ with coefficients in a $Q$-module $U$ is defined as $$H^* (Q,
U):= H^* (C^* (Q, U), \delta )$$ where $C^* (Q,U)=A^*(Q)\otimes U$
and the boundary maps $\delta $ are given by $\delta
(f)=\beta(f)+Rf$.
\end{defn}

Let $U$ be the one dimensional trivial $Q$-module, i.e.,
$\rho=(\rho_W, \rho_V)=(0,0)$. In this case we write $U=\FF _2$.
Then, $H^*(Q, \FF _2 )$ is just the cohomology of the complex
$A(Q)^*$ and the boundary map $\delta$ is equal to the Bockstein
operator. In this case, the cohomology group $H^* (Q, \FF _2)$ also
has a ring structure coming from the usual multiplication of
polynomials. Note that given two cocycles $f,g \in A^*(Q)$, we have
$\beta (fg)=\beta(f)g+f\beta (g)=0 $ modulo $I(Q)$. So, we define
the product of two cohomology classes $[f],[g]\in H^* (Q, \FF _2 )$
by $$[f][g]=[fg]$$ where $fg$ denotes the usual multiplication of
polynomials.

Given a morphism $\varphi: Q_1 \to Q_2$, we have $(\varphi_W)^*
(q_2)=(\varphi_V)_* (q_1)$ by Corollary \ref{cor:pullbacks}. This
shows that $ (\varphi_W)^*: H^* (W_2, \FF _2 )\to H^* (W_1 , \FF _2
)$ takes the entries of $q_2$ into the ideal $I(Q_1)$. Thus,
$\varphi_W$ induces an algebra map $\varphi^*: A^* (Q_2)\to A^*
(Q_1)$ which gives a chain map $C^* (Q_2, U_2) \to C^* (Q_1, U_1)$
where $U_2$ is representation of $Q_2$ and $U_1$ is a representation
of $Q_1$ induced by $\varphi$. So, $\varphi$ induces a homomorphism
$$\varphi ^* : H^* (Q_2 , U_2 ) \to H^* (Q_1, U_1 ).$$ If
$U_1=U_2=\FF _2$, the induced map is also an algebra map.

In the rest of the section, we discuss the interpretations of low
dimensional cohomo-logy, $H^i(Q, U)$ for $i=0,1,2$, in terms of
extension theory. First we calculate $H^0 (Q, U)$. Note that $C^0(Q,
U)=U$ and given $u \in C^0(Q, U)$, we have $\delta (u)
(w)=\rho_W(w)u$. So, $H^0(Q, U)=U^Q$ where
$$U^Q=\{ u \mid \rho _W (w) (u)=0 \ {\rm for \ all} \ w\in W \}.$$
Note that this is analogous to Lie algebra invariants $$U^{\g}=\{ u
\mid x \cdot u =0 \ {\rm for \ all} \ x\in \g \}.$$ We refer to the
elements of $U^Q$ as $Q$-invariants of $U$.

Now, we consider $H^1(Q, U)$. Note that $C^1 (Q, U)\cong \Hom (W,
U)$. Let $d_W : W \to U$ be a $1$-cochain. By our definition of
differentials, $d_W$ is a derivation if and only if $\delta
(d_W)=\beta(d_W)+Rd_W=0$ in $A(Q)^2 \otimes U$. The last equation
can be interpreted as follows: There is a linear map $d_V : V \to U$
such that $$\bigl (1+\rho_W(w)\bigr ) d_W(w)+d_V (Q(w))=0.$$ A
trivial derivation will be a derivation $d_W: W \to U$ of the form
$d_W(w)=\rho_W(w)u$ for some $u \in U$. Note that when $U$ is a
trivial module, $d _W: W \to U$ is a derivation if and only if there
is a linear map $d_V : V \to U$ such that the following diagram
commutes
$$
\begin{CD}
W @>d_W>> U \\
@VVQV @VV{\id}V \\
V @>d_V>> \ U\, .\\
\end{CD}
$$
So, when $U$ is a trivial $Q$-module, we have $$H^1(Q, U)\iso
\Hom_{\bf Quad} (Q, U).$$ If $U=\FF _2 $, then
$$H^1 (Q, \FF _2 )=\ker \{ \beta : H^1 (W, \FF _2 ) \to A^2(Q)\}.$$
So, we have the following:

\begin{pro}
\label{pro:trivialcoef} Let $Z(Q)$ be the vector space generated by
$k$-invariants $q_1,\dots, q_n$ and let $Z(Q)^{\beta}= \{ z \in Z(Q)
\ | \ \beta (z)=0 \}$. Then, $H^1(Q, \FF _2 ) \iso Z(Q)^{\beta }$.
\end{pro}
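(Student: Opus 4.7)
The plan is to construct an explicit isomorphism $\Phi : H^1(Q, \FF_2) \to Z(Q)^{\beta}$ given by $[f] \mapsto \beta(f)$, starting from the formula $H^1(Q, \FF_2) = \ker\{\beta : H^1(W, \FF_2) \to A^2(Q)\}$ noted just before the statement. Since the trivial $Q$-module has $R = 0$, the differential in $C^*(Q, \FF_2)$ is just the Bockstein, so $H^1$ has no coboundary contribution from $C^0(Q, \FF_2) = \FF_2$ and is cut out entirely by a cocycle condition on $H^1(W, \FF_2) = A^1(Q)$. Because $I(Q)$ is generated in degree $2$, its degree-$2$ part equals $Z(Q)$, so $\beta(f) = 0$ in $A^2(Q)$ means precisely that $\beta(f) \in Z(Q) \subseteq H^2(W, \FF_2)$. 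Since $\beta^2 = 0$, the value $\beta(f)$ automatically lands in $Z(Q)^{\beta}$, making $\Phi$ well-defined.

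For injectivity, I would use that on the polynomial algebra $H^*(W, \FF_2) = \FF_2[x_1, \ldots, x_m]$ the Bockstein satisfies $\beta(x_i) = x_i^2$ and is $\FF_2$-linear, so $\beta(\sum a_i x_i) = \sum a_i x_i^2$. The squares $x_1^2, \ldots, x_m^2$ are linearly independent in $H^2(W, \FF_2)$, hence $\beta$ is injective on $H^1(W, \FF_2)$, and so is $\Phi$.

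For surjectivity, the key computation is $\ker\{\beta : H^2(W, \FF_2) \to H^3(W, \FF_2)\}$. Writing a general degree-$2$ class as $z = \sum_{i \leq j} c_{ij} x_i x_j$ and applying $\beta$ as a derivation gives $\beta(z) = \sum_{i<j} c_{ij}(x_i^2 x_j + x_i x_j^2)$, whose vanishing forces $c_{ij} = 0$ for $i < j$ by linear independence of these monomials in the polynomial ring. Hence $\ker \beta|_{H^2} = \spant \{x_1^2, \ldots, x_m^2\} = \Img(\beta|_{H^1})$. Given $z \in Z(Q)^{\beta}$, we may therefore write $z = \beta(f)$ for some $f \in H^1(W, \FF_2)$; then $\beta(f) = z \in Z(Q) \subseteq I(Q)$ says $f \in H^1(Q, \FF_2)$ and $\Phi([f]) = z$.

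Nothing in this argument is particularly subtle; the only point requiring care is to distinguish ``vanishing in $A^*(Q)$'' (the cocycle condition defining $H^1(Q, \FF_2)$) from ``vanishing in $H^*(W, \FF_2)$'' (used for injectivity of $\Phi$ into $Z(Q)^{\beta}$). If there is an obstacle, it is merely the explicit determination of $\ker \beta|_{H^2}$, which is a short computation in a polynomial ring over $\FF_2$.
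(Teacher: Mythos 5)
Your proof is correct and follows the same route the paper intends: the paper states the proposition as an immediate consequence of the formula $H^1(Q,\FF_2)=\ker\{\beta:H^1(W,\FF_2)\to A^2(Q)\}$, and your write-up simply makes the resulting bijection $f\leftrightarrow\beta(f)$ explicit. The details you supply (injectivity of $\beta$ on $H^1(W,\FF_2)$ and the identification $\ker\beta|_{H^2}=\spant\{x_1^2,\dots,x_m^2\}=\Img\beta|_{H^1}$) are exactly the computations left implicit in the paper, and they are carried out correctly.
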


We refer to the elements of $Z(Q)^{\beta}$ as the Bockstein
invariants of $Q$. For an arbitrary $Q$ module $U$, we have the
following:

\begin{pro}
\label{pro:H^1} There is a one-to-one correspondence between $H^1(Q,
U)$ and the splittings of the split extension $0 \to U \to U \rtimes
Q \to Q \to 0$.
\end{pro}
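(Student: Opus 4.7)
The plan is to unfold both sides of the claimed correspondence directly. Because the extension is split, the middle quadratic map takes the form $\widetilde Q(u,w)=(u+\rho_W(w)u,\,Q(w))$ with zero factor set. Any splitting $s=(s_W,s_V)$ with $\pi\circ s=\id_Q$ is forced to have the form $s_W(w)=(d_W(w),w)$ and $s_V(v)=(d_V(v),v)$ for linear maps $d_W: W\to U$ and $d_V: V\to U$. A direct computation reduces the commutativity of the defining square for a morphism in ${\bf Quad}$ to the single pointwise identity
$$(1+\rho_W(w))\,d_W(w)\;=\;d_V(Q(w))\qquad\text{for all }w\in W.$$

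Next I would recognize this identity as the $1$-cocycle condition in $C^*(Q,U)$. Expressing $d_W$ as a column of linear forms in the dual basis $x_1,\dots,x_m$, the coboundary $\delta(d_W)=\beta(d_W)+R\,d_W$ is a column with quadratic-polynomial entries. Using the identification (from Section~\ref{sect:categories}) between the quadratic polynomial $q_j$ and the function $w\mapsto Q_j(w)$, the matrix equation $\beta(d_W)+R\,d_W=d_V\,q$ in $\FF_2[x_1,\dots,x_m]\otimes U$ is equivalent to the displayed pointwise identity. Thus a splitting $(d_W,d_V)$ is the same data as a $1$-cocycle $d_W$ together with a witness $d_V$ to $\delta(d_W)\in I(Q)\otimes U$; this is strictly parallel to the derivation of Equation~(\ref{eqn:twoequations}).

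To pass from $Z^1$ to $H^1$, I would interpret the coboundary $\delta(u_0): w\mapsto \rho_W(w)u_0$ for $u_0\in C^0(Q,U)=U$ as the ``inner'' modification obtained by translating a splitting by $u_0\in U$ inside the split extension; on the $W$-component this shifts $d_W$ by precisely the coboundary $\delta(u_0)$. Declaring two splittings equivalent when they differ by such an inner modification, the assignment $(d_W,d_V)\mapsto [d_W]$ descends to a bijection with $H^1(Q,U)$.

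The step I expect to require the most care is the residual non-uniqueness of $d_V$: for a given $d_W\in Z^1(Q,U)$ the map $d_V$ is only pinned down on $\spant\,\Img(Q)\subseteq V$ and can be chosen freely on any complement. The cleanest reading of the proposition is under the same convention implicit in the companion identification $H^1(Q,U)\iso\Hom_{\bf Quad}(Q,U)$ (for trivial $U$) stated just above: one identifies splittings having equal $W$-components, after which $s\mapsto[d_W]$ is a genuine bijection onto $H^1(Q,U)$.
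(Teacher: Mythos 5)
Your proof is correct and follows essentially the same route as the paper's: write a splitting as $s_W(w)=(d_W(w),w)$, $s_V(v)=(d_V(v),v)$, observe that the morphism condition in $\bf Quad$ is exactly the derivation identity $(1+\rho_W(w))d_W(w)=d_V(Q(w))$, i.e.\ the $1$-cocycle condition $\beta(d_W)+Rd_W=0$ in $A^2(Q)\otimes U$, and match trivial derivations with the splittings obtained by inner modification. You are in fact more explicit than the paper on two points it glosses over --- the passage from cocycles to $H^1$ and the residual freedom of $d_V$ off $\spant\,\Img(Q)$ --- but these refinements do not change the argument.
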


\begin{proof}
Observe that $s: Q \to U \rtimes Q$ is a morphism in $\bf Quad$ if
and only if $ \widetilde Q (s_W (w)) = s_V (Q (w)) $. Since $\pi
s=\id$, we can write $s_W(w)=(d_W(w), w)$ and $s_V (v)=(d_V (v),
v)$. So, $s$ is a morphism in $\bf Quad$ if and only if $
d_W(w)+\rho(w)d_W(w)=d_V (Q(w))$, i.e., $d_W$ is a derivation. It is
easy to see that trivial derivation corresponds to a splitting which
is trivial up to an automorphism of $U\rtimes Q$.
\end{proof}

We now consider extensions of a Bockstein closed quadratic map $Q$
with an abelian kernel $U$, and show that $H^2(Q, U)$ classifies
such extensions up to an equivalence. If there is a diagram of
quadratic maps of the following form
\begin{equation}
\label{eqn:equivalence}
\begin{CD}
{\cal E}_1  : \ \ @. 0 @>>> U @>>> Q_1 @>>> Q @>>> 0 \\
@. @. @| @VV{\varphi}V @| @. \\
{\cal E}_2  : \ \ @. 0 @>>> U @>>> Q_2 @>>> Q @>>> \ 0 \, , \\
\end{CD}
\end{equation}
then we say ${\cal E}_1$ is equivalent to ${\cal E}_2$. Let ${\rm
Ext}(Q, U)$ denote the set of equivalence classes of extensions of
the form $0 \to U \to \widetilde Q \to Q \to 0$ with abelian kernel
$U$ where $\widetilde Q$ and $Q$ are Bockstein closed. We can define
the summation of two extensions as it is done in  group extension
theory. So, ${\rm Ext} (Q, U)$ is an abelian group. We prove the
following:
\begin{pro}
\label{pro:H^2} $H^2(Q, U) \iso {\rm Ext}(Q, U)$.
\end{pro}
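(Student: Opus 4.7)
The plan is to construct an explicit isomorphism $\Theta:{\rm Ext}(Q,U)\to H^2(Q,U)$ directly from the factor-set analysis carried out in Section \ref{sect:Bclosedmaps}, and to verify bijectivity and compatibility with addition.

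Given an extension ${\cal E}:0\to U\to \widetilde Q\to Q\to 0$, I would first choose linear splittings of the projections $U\oplus W\to W$ and $U\oplus V\to V$. Relative to these splittings, Section \ref{sect:Bclosedmaps} writes $\widetilde Q$ in the normal form $\widetilde Q(u,w)=(u+\rho_W(w)u+f(w),Q(w))$, where $\rho=(\rho_W,\rho_V)$ is the prescribed $Q$-module structure on $U$ and $f$ is a quadratic factor set, viewed as an element of $\FF_2[x_1,\dots,x_m]_2\otimes U$. The calculation leading to the second line of (\ref{eqn:twoequations}), combined with Lemma \ref{lem:repeqn} ensuring the first line of (\ref{eqn:twoequations}) is automatic once $\rho$ is a representation, shows that Bockstein closedness of $\widetilde Q$ is equivalent to $\beta(f)+Rf\in I(Q)\otimes U$, i.e. to the cocycle condition $\delta f=0$ in $C^2(Q,U)$. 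Set $\Theta({\cal E}):=[f]\in H^2(Q,U)$.

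Well-definedness and injectivity I would handle in parallel by analyzing how a morphism $\varphi$ as in diagram (\ref{eqn:equivalence}) with identity on $U$ and $Q$ affects the factor set. Such a $\varphi$ is determined by linear maps $t_W:W\to U$ and $t_V:V\to U$ via $\varphi_W(u,w)=(u+t_W(w),w)$ and $\varphi_V(u,v)=(u+t_V(v),v)$, and expanding the commuting relation $\widetilde Q_2\circ \varphi_W=\varphi_V\circ \widetilde Q_1$ yields, as functions from $W$ to $U$,
\[ f_2(w)+f_1(w)\;=\;t_W(w)+\rho_W(w)t_W(w)+t_V(Q(w)). \]
Passing to $A^*(Q)\otimes U$, the term $t_V\circ Q$ lies in $I(Q)\otimes U$ and vanishes, while the remaining piece is precisely $\delta t_W=\beta(t_W)+Rt_W$ once one observes that $\beta(t_W)=\sum a_i x_i^2$ is the degree-two polynomial representing the same $\FF_2$-valued function as the linear form $t_W=\sum a_i x_i$. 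The argument runs in both directions: a morphism $\varphi$ exists iff $f_1$ and $f_2$ are cohomologous, and specializing to $\widetilde Q_1=\widetilde Q_2$ proves that $[f]$ is independent of the chosen splittings. Hence $\Theta$ descends to equivalence classes and is injective.

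For surjectivity I would reverse the construction: given $[f]\in H^2(Q,U)$ with polynomial lift $\bar f\in \FF_2[x_1,\dots,x_m]_2\otimes U$, define $\widetilde Q$ on $U\oplus W$ by the same normal-form formula. Its polarization is bilinear, its projection to $(W,V)$ is $Q$, it realizes $\rho$ on the kernel, and Bockstein closedness follows from $\delta\bar f=0$ together with Lemma \ref{lem:repeqn}, which supplies the missing row of $\widetilde L$ in the sense of (\ref{eqn:twoequations}). Two polynomial lifts of $[f]$ differ by an element of $I(Q)_2\otimes U=\spant\{q_1,\dots,q_n\}\otimes U$, and the well-definedness paragraph produces an explicit equivalence (with $t_W=0$ and $t_V$ reading off the coefficients) between the resulting extensions. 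Finally the Baer sum of extensions of quadratic maps translates on factor sets into addition in $C^2(Q,U)$, so $\Theta$ is an isomorphism of abelian groups. The main obstacle I expect is the bookkeeping between the polynomial ring $\FF_2[x_1,\dots,x_m]_2\otimes U$, in which the factor set of a genuine extension must live, and the quotient $A^*(Q)\otimes U$, in which the cocycle condition is naturally formulated: the indeterminacy introduced by the quotient is exactly what is absorbed by the equivalence relation on extensions, and once this is tracked the rest reduces to the matrix identities already established in Section \ref{sect:Bclosedmaps}.
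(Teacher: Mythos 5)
Your proposal is correct and follows essentially the same route as the paper: both identify factor sets $f$ in the normal form $\widetilde Q(u,w)=(u+\rho_W(w)u+f(w),Q(w))$ with cocycles via the condition $\beta(f)+Rf=0$ in $A^*(Q)\otimes U$, and both show that equivalences of extensions correspond exactly to coboundaries through the relation $f_1+f_2=(1+\rho_W(w))a(w)+b(Q(w))=\delta(a)$. Your additional bookkeeping (the Frobenius identification of the linear term $a(w)$ with $\beta(a)=\sum a_ix_i^2$, the indeterminacy of polynomial lifts modulo $I(Q)_2\otimes U$, and the Baer sum) fills in details the paper explicitly leaves to the reader as being "exactly as in the case of group extensions."
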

\begin{proof}
Note that we already have a fixed decomposition for the domain and
the range of $\widetilde Q$, so we will write our proof using these
fixed decompositions. We skip some of the details which are done
exactly as in the case of group extensions.

First we show there is a 1-1 correspondence between 2-cocycles and
Bockstein closed extensions. Recall that a 2-cocycle is a quadratic
map $f: W \to U$ such that $\beta(f)+Rf =0$ in $A(Q)^*$. Consider
the extension ${\cal E} : 0 \to U \to \widetilde Q \to Q \to 0$
where $$\widetilde Q(u,w)=(u+\rho_W(w)u+f(w), Q(w)).$$ We have seen
earlier that $\widetilde Q$ is Bockstein closed if and only if
$\beta(f)+Rf=0$ in $A(Q)^*$. So, $\cal E$ is an extension of
Bockstein closed quadratic maps if and only if $f$ is a cocycle.

Now, assume that ${\cal E}_1$ and ${\cal E}_2$ are two equivalent
extensions. Let $\varphi : Q_1 \to Q_2 $ be a morphism which makes
the diagram (\ref{eqn:equivalence}) commute. Then, we can write
$\varphi _W (u,w)=(u+a(w), w)$ and $\varphi _V (u,v)=(u+b(v), v)$.
Let $f_1$ and $f_2$ be the cocycles corresponding to extensions
${\cal E} _1 $ and ${\cal E}_2$ respectively. Then, the identity
$Q_2 (\varphi _W (u,w)) =\varphi _V (Q_1 (u,w))$ gives
$$ \bigl (u+a(w)+\rho(w)( u+a(w)) +f_2 (w),\ Q(w)\bigr )=
\bigl (u+\rho(w)u+f_1 (w)+b(Q(w)),\ Q(w)\bigr ).$$ So, we have
\begin{equation}\label{eqn:difference}
f_2(w)+f_1(w)=\bigl ( 1+\rho(w)\bigr )a(w)+b(Q(w)).
\end{equation}
Thus $f_1+f_2=\delta (a) $ in $A(Q)^*$. Conversely, if
$f_1+f_2=\delta (a)$ in $A(Q)^*$, then there is a $b : V \to U$ such
that equation (\ref{eqn:difference}) holds, so we can define the
morphism $\varphi : Q_1 \to Q_2 $ as above so that the diagram
(\ref{eqn:equivalence}) commutes.
\end{proof}

\section{LHS-spectral sequence for 2-power
exact extensions} \label{sect:mod2cohomology}

In this section we study the Lyndon-Hochschild-Serre (LHS) spectral
sequence associated to a Bockstein closed $2$-power exact extension.
We first recall the definition of a $2$-power exact extension.
\begin{defn}\label{defn:2-powerexact}
A central extension of the form $$E(Q): 0 \to V \to G(Q) \to W \to
0$$ with corresponding quadratic map $Q: W \to V$ is called {\it
$2$-power exact} if the following conditions hold:
\begin{enumerate}
\item[(i)] $\dim(V)=\dim(W)$,
\item[(ii)] the extension is a Frattini extension, i.e., image of $Q$
generates $V$, and
\item[(iii)] the extension is effective, i.e., $Q(w)=0$ if and only if
$w=0$.
\end{enumerate}
\end{defn}

In this section, we calculate the mod-$2$ cohomology of $G(Q)$ using
a LHS-spectral sequence when $E(Q)$ is a Bockstein closed $2$-power
exact extension. The mod-$2$ cohomology ring structure of $2$-power
exact groups has a simple form and it is not very difficult to
obtain once certain algebraic lemmas are established. Similar
calculations were given by Rusin \cite[Lemma 8]{Rusin} and
Minh-Symonds \cite{MiSy}.

We first prove an important structure theorem concerning the
$k$-invariants of Bockstein closed 2-power exact extensions.

\begin{pro}
\label{pro:regularsequence}  Let $E(Q): 0 \to V \to G(Q) \to W \to
0$ be a Bockstein closed, 2-power exact extension with $\dim (W)=n$.
Then, the $k$-invariants $q_1,\dots,q_n$, with respect to some basis
of $V$, form a regular sequence in $H^*(W, \FF_2)=\FF _2[x_1,\dots,
x_n]$ and $A^*(Q)=\FF _2 [x_1,\dots, x_n]/(q_1,\dots, q_n)$ is a
finite dimensional $\FF _2$-vector space.
\end{pro}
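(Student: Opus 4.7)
Since $q_1, \dots, q_n$ are $n$ positive-degree homogeneous elements in the Cohen--Macaulay ring $R = \FF_2[x_1, \dots, x_n]$ of Krull dimension $n$, they form a regular sequence if and only if the quotient $A^*(Q) = R/(q_1, \dots, q_n)$ is a finite-dimensional $\FF_2$-vector space, equivalently the common vanishing locus $V(q_1, \dots, q_n) \subset \mathbb{A}^n_{\overline{\FF_2}}$ consists only of the origin. The plan is to prove the latter.

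Set $K = \overline{\FF_2}$ and extend $Q$ to $Q_K : W_K \to V_K$. The explicit expression $q_k = \sum_i Q_k(w_i) x_i^2 + \sum_{i<j} B_k(w_i, w_j) x_i x_j$ gives a bijection between common zeros $a = (a_1, \dots, a_n) \in K^n$ of the $q_k$'s and elements $w = \sum a_i w_i \in W_K$ with $Q_K(w) = 0$; the goal is to show every such $w$ vanishes. Let $\sigma$ act on $W_K$ by the Frobenius $\sigma(\sum a_i w_i) = \sum a_i^2 w_i$. The Bockstein closed hypothesis $\beta(q) = L q$ forces $\beta(q_k)(a) = 0$ for all $k$, and combining this with $\beta(q_k) = \sum_{i<j} B_k(w_i, w_j)(x_i^2 x_j + x_i x_j^2)$ together with the symmetry of $B$ and $B(w_i, w_i) = 0$, a brief manipulation identifies this with the single vector identity $B_K(\sigma(w), w) = 0$ in $V_K$.

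The heart of the argument is upgrading this to full orthogonality: $B_K(\sigma^i(w), \sigma^j(w)) = 0$ whenever $i \neq j$. I would prove $B_K(\sigma^r(w), w) = 0$ by induction on $r \geq 1$. Form the partial sum $w^{(r)} = w + \sigma(w) + \cdots + \sigma^{r-1}(w)$; the inductive hypothesis together with $Q_K(\sigma^l(w)) = \sigma^l(Q_K(w)) = 0$ implies $Q_K(w^{(r)}) = 0$. Applying $B_K(\sigma(u), u) = 0$ to $u = w^{(r)}$ and expanding, every cross-term vanishes either by the inductive hypothesis or by the alternation $B(v, v) = 0$, leaving only $B_K(\sigma^r(w), w)$, which must therefore also vanish.

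Once full orthogonality is in hand, the quadratic map $Q_K$ vanishes identically on the $K$-subspace $W' \subset W_K$ spanned by the Galois orbit $\{\sigma^k(w)\}$. Because $W'$ is Frobenius-invariant, Galois descent produces an $\FF_2$-subspace $W'_0 \subset W$ with $W' = W'_0 \otimes_{\FF_2} K$ and $Q|_{W'_0} = 0$; the effectiveness of $Q$ (condition (iii) of 2-power exactness) then forces $W'_0 = 0$, so $w \in W' = 0$, contradicting $w \neq 0$. The main obstacle will be the inductive orthogonality step, which depends essentially on the initial identity derived from Bockstein closedness together with the alternating nature of $B$ in characteristic two.
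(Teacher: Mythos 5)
Your argument is correct, but it takes a different route from the one written in the paper: the paper's proof of this proposition is essentially a citation (regularity of $q_1,\dots,q_n$ is quoted from Proposition 7.8 of \cite{PaYa}), followed by the Nullstellensatz to pass from the zero-dimensionality of the variety of $I(Q)$ to finite-dimensionality of $A^*(Q)$. You instead give a self-contained proof that reverses the logical direction: you first show directly that the common zero locus of the $q_k$ over $K=\overline{\FF}_2$ is just the origin, and then deduce both finite-dimensionality and regularity at once from the Cohen--Macaulay criterion (for $n$ positive-degree homogeneous elements of $\FF_2[x_1,\dots,x_n]$, regular sequence $\Leftrightarrow$ finite-dimensional quotient $\Leftrightarrow$ trivial variety). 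The core of your argument --- evaluating $\beta(q)=Lq$ at a zero $a$ of $q$ to get $B_K(\sigma(w),w)=0$, bootstrapping by induction on $r$ via the partial sums $w^{(r)}=w+\sigma(w)+\cdots+\sigma^{r-1}(w)$ to full orthogonality of the Frobenius orbit, and then using Galois descent of the $\sigma$-invariant span $W'$ to an $\FF_2$-subspace on which $Q$ vanishes, killed by effectiveness --- is sound; I checked in particular that $Q_K(w^{(r)})=0$ follows from the inductive hypothesis so that the base identity legitimately applies to $w^{(r)}$, and that the only surviving cross-term in $B_K(\sigma(w^{(r)}),w^{(r)})$ is $B_K(\sigma^r(w),w)$. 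This is very likely a reconstruction of the mechanism behind the cited result: the remark following Theorem 5.6 of the paper confirms that Bockstein closedness forces the variety of $I(Q)$ to be $\FF_2$-rational, which is exactly what your descent step establishes. What your version buys is transparency about where each hypothesis enters (Bockstein closedness gives the orthogonality identity, effectiveness kills the descended subspace, and $\dim V=\dim W$ makes the count come out right for the system-of-parameters criterion), at the cost of invoking the Cohen--Macaulay equivalence and Galois descent as black boxes; the paper's version is shorter but opaque at the key step.
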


\begin{proof}
We have shown in \cite[Proposition 7.8]{PaYa} that the
$k$-invariants $q_1,\dots,q_m$ form a regular sequence in $H^*(W;\FF
_2)$. This sequence is regular in any order. To show the second
statement, let $K$ denote the algebraic closure of $\FF_2$. Since
the dimension of the variety associated to $I(Q)=(q_1,\dots,q_m)$ is
zero, the (projective) Nullstellensatz shows that $A^*(Q)$ is a
nilpotent algebra (elements $u$ in positive degree have $u^k=0$ for
some $k$, depending on $u$). However since $A^*(Q)$ is a finitely
generated and commutative algebra, this shows that $A^*(Q)$ is
finite dimensional as a vector space over $K$.
\end{proof}

Recall that a regular sequence in a polynomial algebra is always
algebraically independent (see \cite[Prop. 6.2.1]{Smith3}). So, if
$E(Q)$ is a Bockstein closed $2$-power extension, then the
subalgebra generated by the $k$-invariants $q_1,\dots, q_n$ is a
polynomial algebra.  We denote this subalgebra by $\FF _2[q_1,\dots,
q_n]$. We have the following:

\begin{pro}\label{pro:freeover} Let $E(Q): 0 \to V \to G(Q) \to W \to
0$ be a Bockstein closed, 2-power exact extension with $\dim (W)=n$.
Then, $H^*(W;\FF _2)$ is free as a $\FF _2 [q_1,\dots,q_n]$-module.
As \ $\FF _2[q_1,\dots,q_n]$-modules $H^*(W;\FF_2) \cong
\FF_2[q_1,\dots,q_n] \otimes A^*(Q)$  where $A^*(Q)$ is given the
trivial module structure.
\end{pro}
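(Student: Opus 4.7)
The plan is to combine Proposition \ref{pro:regularsequence} with a standard graded-Nakayama and Poincar\'e-series argument from commutative algebra. By Proposition \ref{pro:regularsequence}, $q_1,\dots,q_n$ form a regular sequence in $R:=H^*(W;\FF_2)=\FF_2[x_1,\dots,x_n]$, and hence are algebraically independent (regular sequences in polynomial algebras always are), so $S:=\FF_2[q_1,\dots,q_n]$ is a polynomial subalgebra on $n$ generators of degree $2$. Moreover $A^*(Q)=R/(q_1,\dots,q_n)$ is a finite-dimensional graded $\FF_2$-vector space.

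I would begin by choosing a homogeneous $\FF_2$-basis $\{b_\alpha\}$ of $A^*(Q)$, lifting each $b_\alpha$ to a homogeneous element of $R$ (denoted by the same symbol), and forming the graded $S$-module map
\[
\varphi\colon S\otimes_{\FF_2} A^*(Q) \longrightarrow R, \qquad s\otimes b_\alpha \longmapsto s\cdot b_\alpha.
\]
Surjectivity of $\varphi$ would follow from graded Nakayama applied to its cokernel $M=R/\Img\varphi$: the module $M$ is nonnegatively graded and finitely generated over $S$ with $M=S^+M$, since the images of the $b_\alpha$ span $R/S^+R = A^*(Q)$ by construction, so one obtains $M=0$ degree by degree.

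For injectivity I would compare Poincar\'e series. Using the classical formula that for a regular sequence of $n$ elements of degree $2$ in $R$ the quotient has Poincar\'e series $(1-t^2)^n/(1-t)^n=(1+t)^n$, together with $P_R(t)=1/(1-t)^n$ and $P_S(t)=1/(1-t^2)^n$, one computes
\[
P_S(t)\cdot P_{A^*(Q)}(t) \;=\; \frac{(1+t)^n}{(1-t^2)^n}\;=\;\frac{1}{(1-t)^n}\;=\;P_R(t).
\]
Hence $\varphi$ is a surjection of nonnegatively graded vector spaces with equal (finite) dimensions in each degree, so it is an isomorphism. This simultaneously gives freeness of $R$ over $S$ and the explicit $S$-module identification $R\cong S\otimes_{\FF_2}A^*(Q)$ with trivial $S$-action on the second factor. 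The substantive content of the statement is already encoded in the regularity of $q_1,\ldots,q_n$ provided by Proposition \ref{pro:regularsequence}; the remainder, as above, is clean commutative-algebra bookkeeping, and there is no genuine obstacle beyond correctly matching the Poincar\'e-series factors.
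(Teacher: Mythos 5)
Your proof is correct, but it takes a genuinely different route from the paper's. The paper first proves finite generation of $H^*(W;\FF_2)$ over $P=\FF_2[q_1,\dots,q_n]$ by an explicit induction on degree (subtracting off lifts of a basis of $A^*(Q)$ and using that the error lies in $(q_1,\dots,q_n)$), then deduces freeness from the fact that the Cohen--Macaulay algebra $H^*(W;\FF_2)$ is finitely generated over the polynomial subalgebra $P$ (citing Theorem 5.4.10 of \cite{BensonII}), and finally observes that a $P$-basis projects to an $\FF_2$-basis of $A^*(Q)$ to get the tensor decomposition. You instead build the candidate isomorphism $\varphi\colon S\otimes A^*(Q)\to R$ directly from lifted basis elements, get surjectivity from bounded-below graded Nakayama, and get injectivity by matching Poincar\'e series via the standard formula $P_{R/(q_1,\dots,q_n)}(t)=(1-t^2)^nP_R(t)$ for a regular sequence of degree-$2$ elements. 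Both arguments rest entirely on Proposition \ref{pro:regularsequence}; yours is more elementary and self-contained in that it avoids the Cohen--Macaulay freeness criterion and replaces it with an explicit dimension count, at the cost of the Hilbert-series bookkeeping (which here is painless because there are exactly $n$ quadrics in $n$ variables). One small remark: your assertion that the cokernel $M$ is finitely generated over $S$ is not justified at that point in the argument (it is essentially a consequence of the surjectivity you are proving), but it is also unnecessary --- the degree-by-degree Nakayama argument you actually run only needs $M$ to be graded and bounded below, so nothing is lost.
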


\begin{proof} Let $P=\FF_2[q_1,\dots, q_n]$ be the subalgebra generated
by $q_1,\dots,q_n$ in $H^*(W;\FF _2)$. Since $A^*(Q)$ is finite
dimensional, $H^*(W;\FF_2)$ is finitely generated over $P$. For
example, if we take $\hat{A}^*(Q)$ a $\FF_2$-vector subspace of
$H^*(W;\FF_2)$ mapping $\FF_2$-isomorphically to  $A^*(Q)$ under the
projection $H^*(W;\FF_2) \to A^*(Q)$, then a homogeneous
$\FF_2$-basis of $\hat{A}^*(Q)$ gives a set of generators of
$H^*(W;\FF_2)$ as a $P$-module.

To prove this, one inducts on the degree of $\alpha \in
H^*(W;\FF_2)$. Let $\{ a_i \mid i \in I \}$ be a $\FF_2$-basis of
$\hat{A}^*(Q)$. We want to show $\alpha$ is in the $P$-span of the
$a_i$. For degree of $\alpha$ equal to one or two, this is immediate
since the degree of $q_i$ is 2 for all $i$. In general, by
subtracting a suitable linear combination of $a_i$'s from $\alpha$,
we get an element $\beta$ in the ideal $(q_1,\dots,q_n)$. To show
$\alpha$ is in the $P$-span of the $a_i$, it is enough to show
$\beta$ is. However $\beta = \sum_{i=1}^n \beta_i q_i$ where the
degree of the $\beta_i$ is 2 less than the degree of $\beta$. By
induction, each $\beta_i$ and hence $\beta$ is in the $P$-span of
the $a_i$ and so we are done.

Since $H^*(W;\FF_2)$ is a polynomial algebra, it is trivially
Cohen-Macaulay. Since  $H^*(W;\FF _2)$ is a finitely generated
$P$-module, the fact that $H^*(W;\FF_2)$ is a free $P$-module
follows from the fact that $P$ is a polynomial algebra (see Theorem
5.4.10 of \cite{BensonII} for example). Thus $H^*(W;\FF _2)$ is a
free $P$-module.

Finally if $\{ b_j \mid j \in J \}$ is a basis for the free
$P$-module $H^*(W;\FF_2)$, then every element $\alpha \in
H^*(W;\FF_2)$ can be written uniquely in the form $\sum_{j \in J}
\alpha_j b_j$ where $\alpha_j \in \FF _2[q_1,\dots,q_n]$. Projecting
to $A^*(Q)$, this shows every element of $A^*(Q)$ can be written
uniquely as a span of the corresponding images of the $b_j$. In
other words, the $\{ b_j \mid j \in J \}$ projects to a basis of
$A^*(Q)$. Thus one can define a map of $\FF
_2[q_1,\dots,q_n]$-modules
$$\FF_2[q_1,\dots,q_n] \otimes A^*(Q) \to H^*(W;\FF_2)$$ which is an
isomorphism.
\end{proof}

Now, consider the LHS-spectral sequence in mod-$2$ coefficients
$$E_2^{p,q} = H^p (W, H^q (V, \FF _2 )) \Rightarrow H^{p+q} (G(Q), \FF_2)$$
associated to the extension $E(Q)$. Let $\{x_1,\dots , x_n \}$
denote a basis for the dual of $W$ and let $\{ t_1,\dots , t_n \}$
be a basis for the dual of $V$.

\begin{lem}
\label{lem:image of d_2} For each $1\leq k \leq m$, we have $d_2
(t_k)=q_k$.
\end{lem}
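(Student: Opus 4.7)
The plan is to identify the differential $d_2$ on $E_2^{0,1}=H^1(V,\FF_2)$ with the transgression map and then match this up with the extension class $q$ of $E(Q)$. This is essentially a standard fact about the LHS spectral sequence of a central extension, so the work lies mostly in bookkeeping with the chosen bases.

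First I would recall that for any central extension $0\to V \to G \to W \to 0$ of abelian groups, the differential $d_2\colon E_2^{0,1}\to E_2^{2,0}$ coincides with the transgression $\tau\colon H^1(V,\FF_2)\to H^2(W,\FF_2)$. The transgression in this situation is $\FF_2$-linear, and under the natural identifications
\[
\Hom_{\FF_2}(H^1(V,\FF_2),H^2(W,\FF_2)) \;\cong\; \Hom_{\FF_2}(V^*,H^2(W,\FF_2)) \;\cong\; H^2(W,\FF_2)\otimes V \;\cong\; H^2(W,V),
\]
the transgression is precisely the extension class $q\in H^2(W,V)$. This is the key input; it can be read off either from the bar construction description of $q$ or from the universal example given by the extension of $\FF_2$ by $\FF_2$ whose class is a chosen generator of $H^2(\FF_2,\FF_2)$.

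Next, I would unwind the identifications relative to the chosen bases. The basis $\{v_1,\dots,v_n\}$ of $V$ is dual to $\{t_1,\dots,t_n\}\subset H^1(V,\FF_2)$, and by definition $q=(q_1,\dots,q_n)$ means $q=\sum_k q_k\otimes v_k$ under $H^2(W,V)\cong H^2(W,\FF_2)\otimes V$. Tracking this through the chain of isomorphisms above, the linear map $\tau\colon V^*\to H^2(W,\FF_2)$ corresponding to $q$ sends $t_k$ to $q_k$. Combining this with the identification $d_2=\tau$ on $E_2^{0,1}$ yields $d_2(t_k)=q_k$.

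There is no substantive obstacle here; the only thing to be careful about is sign/convention consistency between the transgression and the extension class, but since we are working over $\FF_2$ signs are irrelevant, and the convention for $q$ given by the bilinear factor set $f$ from \cite[Lem.~2.3]{PaYa} is exactly the one that makes the bar-cocycle representative of the transgression of $t_k$ equal to $q_k$ on the nose. If desired, one can alternatively verify the claim by a direct computation with the bar resolution using this $f$: the cocycle representative of $t_k\in H^1(V,\FF_2)$ is the $k$-th coordinate projection, and applying the standard formula for $d_2$ in the LHS spectral sequence to this cochain gives back the $k$-th component of $f$, whose cohomology class is $q_k$.
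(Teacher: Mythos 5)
Your proposal is correct and takes essentially the same route as the paper: the paper likewise reduces to the standard identification of $d_2$ on the fiber line with the extension class, realizing it concretely by pulling back the identity class along the classifying map $BW \to K(V,2)$ from the path--loop fibration $BV \to EBV \to K(V,2)$, where $d_2$ is an isomorphism because the total space is contractible. Your transgression/bar-resolution phrasing is just a group-cohomological rendering of the same ``universal example plus naturality'' argument, followed by the same bookkeeping with the dual bases.
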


\begin{proof}
From standard theory, the central extension $E(Q)$ corresponds to a
principal $BV$-bundle $BV \to BG(Q) \to BW$ with classifying map $f:
BW \to BBV$ where $BBV=K(V,2)$. Also $H^2(BBV, V)\cong \Hom(V,V)$
and $q=f^*(\id)$ where $\id \in \Hom(V, V)$ is the identity map.

In the LHS-spectral sequence for the fibration $BV \to EBV \to
K(V,2)$, we have $d_2: H^1(BV , V) \to H^2(BBV, V)$ is an
isomorphism (since $EBV$ is contractible). In fact, it identifies
$H^1(BV, V)=H^2(BBV, V)=\Hom(V, V)$, so $d_2(\id)=\id$. This
identity pulls back to our fibration $BV \to BG(Q) \to BW$ as
$d_2(\id)=f^*(\id)=q$. From this the lemma easily follows after
taking a basis for $V$.
\end{proof}

Now we are ready for some computations.

\begin{thm}
\label{thm:mod 2 cohomology} Let $E(Q): 0 \to V \to G(Q) \to W \to
0$ be a Bockstein closed, $2$-power exact extension with $\dim
(W)=n$. Then
$$H^*(G (Q);\FF_2) \cong \FF_2[s_1,\dots,s_n] \otimes A^*(Q)$$
as graded algebras, where $deg(s_i)=2$ for $i=1,\dots, n$.
\end{thm}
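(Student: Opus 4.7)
The plan is to analyze the Lyndon-Hochschild-Serre spectral sequence
$$E_2^{p,q} = H^p(W, H^q(V, \FF_2)) \Longrightarrow H^{p+q}(G(Q), \FF_2)$$
associated to $E(Q)$. Since $V$ and $W$ are elementary abelian $2$-groups of rank $n$, the $E_2$-page is the polynomial algebra $\FF_2[x_1, \dots, x_n] \otimes \FF_2[t_1, \dots, t_n]$ on one-dimensional generators. By Lemma \ref{lem:image of d_2} we have $d_2(t_k) = q_k$, and since $d_2$ is a derivation it kills each $x_i$ as well as each $t_i^2$ (the latter because $2 t_i q_i = 0$ in characteristic two).

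To compute $E_3$, set $s_i := t_i^2$ and use the vector-space decomposition $\FF_2[t_1,\dots,t_n] \cong \Lambda(t_1,\dots,t_n) \otimes \FF_2[s_1,\dots,s_n]$. Under this identification, $d_2$ becomes the Koszul differential on $\FF_2[x_1,\dots,x_n] \otimes \Lambda(t_1,\dots,t_n)$ attached to the sequence $(q_1,\dots,q_n)$, tensored with the identity on $\FF_2[s_1,\dots,s_n]$. By Proposition \ref{pro:regularsequence} this sequence is regular in $\FF_2[x_1,\dots,x_n]$, so the Koszul cohomology is concentrated in homological degree zero and equals $A^*(Q)$. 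Hence
$$E_3 \cong A^*(Q) \otimes \FF_2[s_1, \dots, s_n]$$
as bigraded algebras, with $A^*(Q)$ on the bottom row and each $s_i$ in bidegree $(0,2)$.

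The next step, which I expect to be the main obstacle, is showing collapse at $E_3$. The algebra $E_3$ is generated by $x_i \in E_3^{1,0}$ and $s_i \in E_3^{0,2}$. The $x_i$'s are permanent cycles since they sit on the bottom row, and for $s_i$ the only potentially nonzero higher differential is $d_3 : E_3^{0,2} \to E_3^{3,0}$. Here I would invoke Kudo's transgression theorem in the mod-$2$ Serre spectral sequence: since $t_i$ is transgressive with $\tau(t_i) = q_i$, the class $s_i = Sq^1(t_i)$ is transgressive with $\tau(s_i) = Sq^1(q_i) = \beta(q_i)$. The Bockstein closed hypothesis $\beta(q) = Lq$ forces $\beta(q_i) \in I(Q)$, so $d_3(s_i) = 0$ in $E_3^{3,0} = A^3(Q)$, and therefore $E_\infty = E_3$.

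It remains to resolve the extension problem. Choose lifts $\tilde{s}_i \in H^2(G(Q), \FF_2)$ of the classes $s_i \in E_\infty^{0,2}$, and note that the inflation of each $q_i$ into $H^2(G(Q), \FF_2)$ vanishes because $q_i$ is killed by $d_2$ and hence represents the zero class in $E_\infty^{2,0}$. This gives a well-defined graded algebra map $A^*(Q) \otimes \FF_2[\tilde{s}_1, \dots, \tilde{s}_n] \to H^*(G(Q), \FF_2)$, and the associated graded of a monomial $x^I \tilde{s}^J$ is exactly $x^I s^J \in E_\infty$, which is nonzero whenever $x^I$ is nonzero in $A^*(Q)$. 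A degree-by-degree dimension count against $E_\infty$ then shows the map is an isomorphism, yielding the claimed description of $H^*(G(Q), \FF_2)$.
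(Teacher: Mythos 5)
Your proposal is correct and follows essentially the same route as the paper: the LHS spectral sequence with $d_2(t_k)=q_k$, the regular-sequence/Koszul computation giving $E_3\cong A^*(Q)\otimes\FF_2[t_1^2,\dots,t_n^2]$, Kudo/Serre transgression giving $d_3(t_k^2)=\beta(q_k)=0$ in $A^*(Q)$, and collapse for dimensional reasons. The only cosmetic difference is in resolving the multiplicative extension: the paper deduces algebraic independence of the $s_i$ from $\res^{G}_V(s_i)=t_i^2$, while you read the same conclusion off the associated graded; both arguments are fine.
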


\begin{proof}
Consider the LHS-spectral sequence for the (central) extension
$E(Q)$. The $E_2$-page has the form
$$E_2^{*,*}=H^*(W , \FF_2) \otimes H^*(V, \FF _2)
=\FF_2[x_1,\dots,x_n] \otimes \FF _2 [t_1,\dots,t_n].$$ We have
previously seen that $d_2(t_k)=q_k$. Here we have chosen basis for
$W$,$V$ and their duals as done previously. Note that $d_2(t_k^2)=0$
and $d_3(t_k^2)=\beta(q_k)$ by a standard theorem of Serre. Let
$$\wedge^*(t_1,\dots,t_n)$$ be the $\FF_2$-subspace of $H^*(V,
\FF_2)$ generated by the monomials $t_1^{\epsilon_1}\dots
t_n^{\epsilon_n}$ where $\epsilon_i = 0,1$ for each $i=1,\dots,n$.
Then, $$H^*(V , \FF _2) \cong \FF _2 [t_1^2,\dots,t_k^2] \otimes
\wedge^*(t_1,\dots,t_n)$$ as $\FF_2$-vector spaces. (Not as
algebras!) Using Proposition \ref{pro:freeover}, we also write
$$H^*(W, \FF_2) \cong \FF_2 [q_1,\dots,q_n] \otimes A^*(Q)$$ as
vector spaces. Thus as a differential graded complex, $(E_2^{*,*},
d_2)$ splits as a tensor product
$$E_2^{*,*} \cong \FF _2 [t_1^2,\dots,t_n^2] \otimes A^*(Q) \otimes
\bigl ( \wedge^*(t_1,\dots,t_n) \otimes \FF _2[q_1,\dots,q_n],
d_2\bigr )$$ where the differential on the first two tensor summands
is trivial. By K\" unneth's theorem, $$E_3^{*,*} \cong \FF _2
[t_1^2,\dots,t_n^2] \otimes A^*(Q) \otimes
H^*(\wedge^*(t_1,\dots,t_n) \otimes \FF _2 [q_1,\dots,q_n], d_2).$$
Again by K\" unneth's theorem, the final term is just $H^*(pt, \FF
_2)$ since it breaks up as the tensor of
$$H^*\bigl (\wedge^*(t_j) \otimes \FF _2 [q_j],\ d_2(t_j)=q_j \bigr )$$
which is the cohomology of a point. Thus
$$E_3^{*,*} = \FF _2[t_1^2,\dots,t_n^2] \otimes A^*(Q)$$
with $d_3(t_j^2)=\beta(q_j)$. Since $Q$ is Bockstein closed,
$\beta(q_j)=0$ in $A^*(Q)$ and so $d_3(t_j^2)=0$. Thus
$E_3^{*,*}=E_4^{*,*}$. By dimensional considerations, there can be
no further differentials in the spectral sequence and so we see
$E_{\infty}^{*,*}=E_3^{*,*}$.

This says, in particular, that there are elements $s_i \in H^*(G,
\FF_2)$ such that $$\res^{G}_V (s_i)=t_i^2.$$ Since the $t_i$'s are
algebraically independent in $H^*(V, \FF_2)$, we conclude that the
$s_i$'s are algebraically independent in $H^*(G, \FF_2)$. Thus when
we define an $\FF_2$-vector space homomorphism $$\FF_2
[s_1,\dots,s_n] \otimes A^*(Q) \to H^*(G, \FF_2 )$$ using the
inclusion map on the first factor and the inflation map on the
second, we will have a well-defined map of algebras. (Note that the
inflation map is always an algebra map.) Finally by the structure of
$E_{\infty}^{*,*}$, it is clear that this map is onto, and since our
algebras have finite type, this means that it is an isomorphism of
algebras as desired.
\end{proof}

\begin{rem} Note that although we assumed that the extension $E(Q)$
is $2$-power exact in the above calculation, we only use the fact
that the $k$-invariants $q_1\dots, q_n$ form a regular sequence. If
$E(Q): 0 \to V \to G(Q) \to W \to 0$ is a Bockstein closed extension
with $\dim V=n$, $\dim W=m$ such that the $k$-invariants $q_1\dots,
q_n$ form a regular sequence, then the $k$-invariants will be
algebraically independent in $\FF_2[x_1,\dots, x_m]$, in particular,
they will be linearly independent. This shows that in this case,
$E(Q)$ is a Frattini extension, i.e., $\{ Q(w) \mid w\in W \}$
generates $V$. Also, we must have $n \leq m$ for dimension reasons.
The assumption that $Q$ is Bockstein closed will imply that the
variety of $I(Q)$ is $\FF_2$-rational, so $W$ must have a $m-n$
dimensional subspace $W'$ such that $Q$ restricted to $W'$ is zero.
But it may happen that this $W'$ has nontrivial commutators with the
rest of the elements in $W$. So, we can not conclude that $G(Q)$
splits as $G(Q)\iso G' \times \ZZ /2$. Hence, these groups are still
interesting and the calculation above shows that the mod-2
cohomology of these groups is also in the form $H^*(G, \FF_2)\iso
A^*(Q) \otimes \FF_2[s_1,...,s_n].$
\end{rem}

The group extensions associated to the strictly upper triangular
matrices $\mathfrak{u} _n (\FF _2 )$ are Bockstein closed $2$-power
exact extensions (see Example \ref{ex:gl induced}). So, we have a
complete calculation for the mod-2 cohomology of these groups. To
calculate the Bockstein's of the generators of mod-2 cohomology of a
$2$-power exact extension, we need to consider also the
Eilenberg-Moore spectral sequence associated to the extension.

\section{Eilenberg-Moore Spectral Sequence}
\label{section: EMSS}

In this section we study the Eilenberg-Moore Spectral Sequence of a
Bockstein closed 2-power exact extension
$$E(Q): 0 \to V \to G(Q) \to W \to 0$$
associated to a quadratic map $Q: W \to V$. Although we have already
found the algebra structure of the mod-$2$ cohomology of $G(Q)$ as
$$H^*(G(Q);\mathbb{F}_2) \cong \mathbb{F}_2[s_1,\dots,s_n]
\otimes A^*(Q)$$ using the LHS-spectral sequence, we find the
EM-spectral sequence more useful in studying the Steenrod algebra
structure of $H^*(G(Q);\mathbb{F}_2)$. In fact, the theorems laid
out in Larry Smith's paper \cite{Smith1} directly compute the
behavior of the EM-spectral sequence in our case and give us the
Steenrod structure we desire. In order to give a fuller picture of
the underlying topology, we summarize some essentials of the
EM-spectral sequence here.

Consider a pullback square of spaces
$$
\begin{CD}
X \times_B Y @>>> Y \\
@VVV @VVpV \\
X @>f>> B \\
\end{CD}
$$
where $p: Y \to B$ is a fibration and $B$ is simply-connected. The
space $X \times_B Y$ is given as
$$X \times_B Y =\{ (x,y) \in X \times Y \mid f(x)=p(y) \}$$
and can be viewed as an amalgamation of $X$ and $Y$ over $B$. Fix
$k$ a field, and let $C^*(X)$ denote the dga (differential graded
algebra) given by the cochain complex of $X$ with coefficients in
$k$. One can show that the natural map $\alpha: C^*(X)
\otimes_{C^*(B)} C^*(Y) \to C^*(X \times_B Y)$ yields an isomorphism
$$\Tor_{C^*(B)}(C^*(X),C^*(Y)) \cong H^*( X \times_B Y)$$
of algebras. (Though one should be careful in interpreting the
algebra structure of the Tor term.) For more details on this
isomorphism see \cite[Thm 3.2]{Smith1}.

One then can show algebraically, through various filtrations of
resolutions computing the Tor term above, that there is a spectral
sequence starting at
$$E_2^{*,*}=\Tor_{H^*(B)}(H^*(X),H^*(Y))$$ converging to
$\Tor_{C^*(B)}(C^*(X),C^*(Y)) \cong H^*( X \times_B Y)$.

Explicitly, this can be constructed as a second quadrant spectral
sequence in the $(p,q)$-plane using the {\bf bar resolution}. The
picture of the $E_1$-page using the bar resolution is as follows: on
the $p=-n$ line, one has the algebra
$$H^*(X) \otimes \bar{H}^*(B) \otimes \dots \otimes \bar{H}^*(B)
\otimes H^*(Y)$$ where all tensor products are over $k$, $\bar{H^*}$
denotes the positive degree elements of $H^*$, and $n$ factors of
$\bar{H}^*(B)$ are used in the above tensor product. We place the
graded complex above on the $p=-n$ line in such a way that the
elements of total degree $q$ are placed at the $(-n, q)$ lattice
point in the $(p,q)$-plane. Let $[a|b_1|\dots|b_n|c]$ be short hand
for $a \otimes b_1 \otimes \dots \otimes b_n \otimes c$. The
differential $d_1$ is horizontal moving one step to the right and is
given explicitly in characteristic 2 (we will only use this case and
we do this also to avoid stating the signs!) by:
\begin{eqnarray*}
d_1([a|b_1| \dots |b_n|c]) &=& [af^*(b_1)|b_2|\dots |b_n|c] \\
&&+\Sigma_{i=1}^{n-1} [a|b_1| \dots |b_ib_{i+1} |\dots |b_n|c] \\
&&+[a|b_1| \dots |b_{n-1}|p^*(b_n)c]
\end{eqnarray*}
for all $a \in H^*(X), b_i \in \bar{H}^*(B), c \in H^*(Y)$.

The power of the EM-spectral sequence is the availability of this
geometric resolution to represent its $E_1$-term. For example the
$p=-1$ line can be interpreted as a portion of $H^*(X \times B
\times Y)$. If $A$ is the kernel of $d_1$ on this line, then $A$ is
the set of elements of the form $[a|b|c]$ satisfying
$$[af^*(b)|c]=[a|p^*(b)c],$$
and the elements of $A$ are permanent cycles in the EM-spectral
sequence. As shown in \cite{Smith1}, there is a Steenrod module
structure on the $p=-1$ and $p=0$ lines via the natural
identification of them inside $H^*(X \times B \times Y)$ and $H^*(X
\times Y)$ respectively. Furthermore, this Steenrod module structure
persists through all pages of the spectral sequence. Finally the
associated filtration on $H^*(X \times_B Y)$ is a filtration of
Steenrod modules and components of the associated graded module
agree with the Steenrod module structure on the $E_{\infty}$-page on
the $p=0$ and $p=-1$ lines of the EM-spectral sequence with the bar
resolution. In fact various authors have shown that the whole
EM-spectral sequence has a natural structure of a module over the
Steenrod algebra (using the bar resolution model) with ``vertical''
and ``diagonal'' Steenrod operations (corresponding to the fact that
the EM-spectral sequence is not an unstable module over ${\cal
A}_2$). We won't need these here - we will only need the structure
on the $p=0$ and $p=-1$ lines which was already laid out in
$\cite{Smith1}$ very naturally.

The bar resolution mentioned above is a nice natural resolution that
can be used to compute $E_2^{*,*}=\Tor_{H^*(B)}(H^*(X),H^*(Y))$ in
the EM-Spectral sequence and has the benefits of being ``natural''
and ``geometric'' and hence can be used to study natural operations
on the spectral sequence. However typically this resolution is too
big to carry out computations directly. In the case where $H^*(B)$
is a polynomial algebra, a smaller Koszul resolution is available to
compute this tor-term and the $E_2$-page is more tractable. Finally
if the ideal generated by $f^*(H^*(B))$ in $H^*(X)$ is nice enough,
then a change of rings isomorphism can also be used to simplify the
calculations immensely. For details see \cite{Smith1}. We will
summarize the main result below after a necessary definition.

\begin{defn}
Let $k$ be a field and $\Lambda$ be a graded commutative algebra
over $k$. An ideal $I$ in $\Lambda$ is called a {\it Borel ideal} if
there is a regular sequence $\{ x_1, x_2, \dots, x_n, \dots \}$
(either finite or infinite)  that generates the ideal $I$. Recall
that a regular sequence $\{ x_1, x_2, \dots, x_n \dots \}$ is one
such that $x_1$ is not a zero divisor in $\Lambda$ and $x_{i+1}$ is
not a zero divisor of $\Lambda/(x_1,\dots,x_i)$ for all $i\geq 1$.
\end{defn}

\begin{thm}[Collapse Theorem]
\label{thm:EMSS Collapse}
Let
$$\begin{CD}
X \times_B Y @>>> Y \\
@VVV @VVpV \\
X @>f>> B \\
\end{CD}$$
be a pullback square of spaces, with $B$ simply connected, $p$ a
Serre fibration. Furthermore, assume that \\  (i) $p^*: H^*(B) \to
H^*(Y)$ is onto, \\  (ii) $\ker(p^*)$ is a Borel ideal of $H^*(B)$,
and \\  (iii) ${\rm Im} (f^*)$ generates a Borel ideal $J$ of
$H^*(X)$.\\
Then, the associated Eilenberg-Moore spectral sequence collapses at
the $E_2$-page and we have
$$
E_2^{*,*} = E_{\infty}^{*,*} \cong (H^*(X)/J) \otimes
\Lambda^*(u_1,\dots) \cong H^*(X \times_B Y)
$$
where $\Lambda^*(u_1,\dots)$ is an exterior algebra on generators
all of which lie on the $p=-1$ line of the spectral sequence.
\end{thm}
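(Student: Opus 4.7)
The plan is to exhibit a small model for the $E_2$-page using a Koszul resolution afforded by conditions (i) and (ii), to identify it as a tensor product via condition (iii), and then to verify collapse by a multiplicative filtration-degree argument.

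First I would use (i) and (ii) to build a Koszul resolution of $H^*(Y)$ over $H^*(B)$. By (i), $p^*:H^*(B)\to H^*(Y)$ is surjective, so $H^*(Y)\cong H^*(B)/\ker p^*$; by (ii), the kernel is generated by a regular sequence $b_1,b_2,\dots$ in $H^*(B)$. The Koszul complex
$$K_\bullet \;=\; H^*(B)\otimes \Lambda^*(u_1,u_2,\dots),\qquad d(u_i)=b_i,$$
with each $u_i$ placed on the $p=-1$ line and $\deg u_i=\deg b_i$, is then a free $H^*(B)$-resolution of $H^*(Y)$. Tensoring over $H^*(B)$ with $H^*(X)$ reduces the computation of $E_2^{*,*}=\Tor_{H^*(B)}^{*,*}(H^*(X),H^*(Y))$ to the cohomology of the small complex
$$\bigl(H^*(X)\otimes \Lambda^*(u_1,u_2,\dots),\,\bar d\bigr),\qquad \bar d(u_i)=f^*(b_i).$$

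Next I would invoke (iii) to identify this cohomology as a tensor product. The ideal $J=\Img(f^*)\cdot H^*(X)$ is assumed to be Borel, so it is itself generated by a regular sequence $c_1,c_2,\dots$ in $H^*(X)$; writing $f^*(b_i)=\sum_j g_{ij}c_j$ allows one to compare the Koszul complex on $\{f^*(b_i)\}$ with the (acyclic) Koszul complex on $\{c_j\}$. A change-of-rings spectral sequence (or, equivalently, the computation carried out in \cite{Smith1} for this precise setup) yields the graded-algebra identification
$$E_2^{*,*}\;\cong\;\bigl(H^*(X)/J\bigr)\otimes \Lambda^*(u_1,u_2,\dots),$$
with the surviving exterior generators all sitting on the $p=-1$ line. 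Collapse then follows from a clean range argument combined with the multiplicativity of the EM-spectral sequence: for $r\ge 2$ the differential $d_r$ sends a bidegree $(p,q)$ class to $(p+r,q-r+1)$, so a generator $u_i$ on the $p=-1$ line is mapped into $p\ge 1$, outside the support of $E_r$, and a class of $H^*(X)/J$ on $p=0$ lands in $p\ge 2$, also outside the support; the Leibniz rule then forces $d_r=0$ for all $r\ge 2$. Convergence of the EMSS, already recalled in this section via \cite{Smith1}, identifies $E_\infty^{*,*}$ with $H^*(X\times_B Y)$.

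The principal obstacle is the second step: rigorously justifying the tensor-product identification of the Koszul homology for the possibly non-regular sequence $\{f^*(b_i)\}$ in $H^*(X)$. This is precisely where the Borel hypothesis on $J$ is used in an essential way, and the argument must be set up carefully in the graded-commutative setting so that the outcome is an isomorphism of algebras rather than merely of vector spaces; this is exactly where a change-of-rings argument, or the identification of the Koszul algebra of a complete intersection, has to be invoked.
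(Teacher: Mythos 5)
Your proposal is correct and follows essentially the same route as the paper, whose entire proof of this statement is a citation of Theorem 3.1 in Smith's paper \cite{Smith1}; your sketch (Koszul resolution from (i)--(ii), change-of-rings identification of the $\Tor$ term using (iii), then collapse via multiplicativity since the algebra generators sit on the $p=0$ and $p=-1$ lines and all $d_r$ with $r\ge 2$ push them into the vanishing region $p>0$) is precisely the standard argument behind that citation. You also correctly isolate the one genuinely delicate step, the tensor-product identification of the Koszul homology when $\{f^*(b_i)\}$ need not itself be regular, which is exactly what the Borel hypothesis and Smith's change-of-rings argument are for.
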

\begin{proof}
See  Theorem 3.1 in \cite{Smith1}.
\end{proof}

\begin{rem}
Note that in the above theorem, the final isomorphism is an
isomorphism of $k$-vector spaces in general due to lifting issues
when lifting the algebra structure over the associated filtrations.
We will come back to this in our specific example later.
\end{rem}

In our case we are interested in studying the cohomology of groups
$G(Q)$ given by central extensions
$$E(Q): 0 \to V \to G(Q) \to W \to 0$$
where $V$ and $W$ are elementary abelian. We have seen that the
category of such extensions is naturally equivalent to the category
of quadratic maps from $W$ to $V$. However it is basic in group
cohomology that the equivalence classes of extensions of this form
are uniquely specified by a cohomology class in $H^2(W,V)$. By the
representability of cohomology, $H^2(W,V)$ is itself isomorphic to
$[BW, BBV]$ where $[-,-]$ denotes homotopy classes of maps, $BG$
denotes the classifying space of a monoid $G$, and $BBV=K(V,2)$.

Under this correspondence we will denote the homotopy class
representing a specific quadratic form $Q: W \to V$ by $BQ: BW \to
BBV$. Thus we have a pullback square
$$
\begin{CD}
BG(Q) @>>> EBV \\
@VVV @VVpV \\
BW @>BQ>> BBV \\
\end{CD}
$$
for which we have an EM-spectral sequence
$$E_2^{*,*}=\Tor_{H^*(BBV)} (H^*(BW),\FF _2) \Rightarrow
H^*(G(Q), \FF _2).$$ In fact the same pullback square above exhibits
$BG(Q)$ as the homotopy fiber of the map $BQ$. We record these
observations:

\begin{pro} Let $V$, $W$ be elementary abelian 2-groups.
Then there is a bijective correspondence between equivalence classes
of quadratic forms $Q: W \to V$ and $[BW, BBV]$. If $BQ: BW \to BBV$
is the homotopy class of maps associated to a quadratic form $Q: W
\to V$ then $BG(Q)=HF(BQ)$ where $HF$ stands for homotopy fiber.
\end{pro}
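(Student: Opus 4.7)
The plan is to establish the bijection in two stages and then unwind the construction to identify $BG(Q)$ with $HF(BQ)$. The key input is the dictionary already set up in the paper together with standard facts from obstruction theory.

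For the first claim, I would produce the correspondence as a composite of three bijections:
\[
\{ Q:W\to V\}/\!\sim \;\longleftrightarrow\; \{E:0\to V\to G\to W\to 0\}/\!\sim \;\longleftrightarrow\; H^2(W,V) \;\longleftrightarrow\; [BW,BBV].
\]
The first arrow is Corollary 2.4 of \cite{PaYa} together with the equivalence of categories in Proposition \ref{pro:equivalence}: each quadratic map $Q$ determines a unique equivalence class $E(Q)$ with bilinear factor set satisfying $f(w,w)=Q(w)$, and conversely every central extension gives back a quadratic map via squaring. The second arrow is the classical classification of central extensions of $W$ by $V$ by the extension class $q\in H^2(W,V)$. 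The third arrow is Brown representability: since $V$ is a discrete abelian $2$-group, $BV=K(V,1)$ and $BBV=K(V,2)$, so $H^2(BW,V)\cong [BW,K(V,2)]=[BW,BBV]$, and since $W$ is also discrete abelian, $H^2(BW,V)=H^2(W,V)$.

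For the second claim, the plan is to apply the classifying space functor to the central extension $E(Q)$ to obtain a principal fibration $BV\to BG(Q)\to BW$, and then to recognize its classifying map $BW\to BBV$ as the homotopy class $BQ$. This recognition is built into the chain above: the extension class $q\in H^2(W,V)$ equals the pullback $(BQ)^*(\iota)$, where $\iota\in H^2(BBV,V)\cong \Hom(V,V)$ is the identity class. This is exactly the identification used in the proof of Lemma \ref{lem:image of d_2}. Once the classifying map is known to be $BQ$, the identification $BG(Q)=HF(BQ)$ is immediate from the standard fact that the total space of a principal $K(V,1)$-bundle is the homotopy fiber of its classifying map into $K(V,2)$.

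The main obstacle is really just bookkeeping rather than a conceptual difficulty: one must verify that the three bijections above are compatible, so that tracing a given $Q$ through the chain yields the homotopy class whose associated principal fibration recovers $BG(Q)$ (rather than some equivalent but a priori distinct fibration). This compatibility follows from the naturality of all the identifications involved; in particular, the computation $d_2(t_k)=q_k$ in Lemma \ref{lem:image of d_2} already implicitly uses (and confirms) that the classifying map of $BV\to BG(Q)\to BW$ represents exactly the extension class $q$, which is precisely the compatibility required.
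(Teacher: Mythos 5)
Your proof is correct and follows essentially the same route as the paper: the paper likewise obtains the bijection by composing the equivalence of quadratic maps with extensions, the classification of central extensions by $H^2(W,V)$, and the representability isomorphism $H^2(W,V)\cong[BW,BBV]$, and then identifies $BG(Q)$ as the homotopy fiber via the pullback of the path-loop fibration over $K(V,2)$, with the compatibility $q=(BQ)^*(\id)$ appearing exactly as in the proof of Lemma \ref{lem:image of d_2}. No discrepancies to report.
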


Similarly, there is a natural equivalence between $n$-ary forms $W
\to V$ and homotopy classes of maps $[BW,B^nV]$ where $B^nV=K(V,n)$.
However in this case the homotopy fiber is not an Eilenberg-MacLane
space but has a two stage Postnikov tower.

It is well-known that if $\dim(V)=1$ then the characteristic element
$\kappa \in H^n(K(V,n))$ is a generator of mod-$2$ cohomology
$H^*(K(V,n))$ as a free module over the Steenrod algebra
$\mathcal{A}_2$. As an algebra $H^*(K(V,n))$ is a polynomial algebra
on all ``permissable Steenrod operation sequences'' on $\kappa$ of
excess less than $n$. The $\dim(V)
>1$ case follows similarly from K\" unneth's theorem. Using these
facts, one sees that conditions (i) and (ii) in
Theorem~\ref{thm:EMSS Collapse} hold trivially in this case. Thus it
remains to consider condition (iii).

When $Q$ is Bockstein closed, it is clear that  the ideal $J$
generated by ${\rm Im}(BQ)^*$ is exactly the ideal $I(Q)$ generated
by the components of $Q$. This is because the Bockstein is $Sq^1$
and all other Steenrod squares on these components are determined as
they have degree 2 and automatically lie in $I(Q)$. Thus in the case
of a Bockstein closed quadratic form, $J$ is a Borel ideal if and
only if $I(Q)$ is. This yields the following theorem immediately.

\begin{thm}[EM-SS collapse for $G(Q)$]
Let $E(Q):0 \to V \to G(Q) \to W \to 0$ be a Bockstein closed
extension associated to the quadratic map $Q: W \to V$. Assume that
$I(Q)$ is a Borel ideal, i.e., $\{ q_1, \dots, q_n \}$ is a regular
sequence in $H^*(BW)$. This holds for example when $Q$ is Bockstein
closed and $2$-power exact. Then, the EM-spectral sequence collapses
at the $E_2$-page and
$$E_2^{*,*} = E_{\infty}^{*,*} = A^*(Q) \otimes \Lambda^*(u_{ij}).$$
This lifts to give $H^*(G(Q),\mathbb{F}_2) \cong A^*(Q) \otimes
\Lambda^*(\hat{u}_{ij})$ as $\mathbb{F}_2$-vector spaces (not as
algebras). Here $A^*(Q)=H^*(W)/I(Q)$ and the $u_{ij}$ are exterior
Koszul generators corresponding to the polynomial subalgebra
$\mathbb{F}_2[\kappa_{ij} \mid  i=1,\dots, n,\ j \geq 0]$ of
$H^*(K(V,2))$. Specifically, $\kappa_{ij}=Sq^{2^j}\cdots Sq^2 Sq^1
\kappa_i$ were $\kappa_i$ is the $i$-th characteristic element in
$H^2(K(V,2))$. Thus $u_{ij}$ lies in bidegree $(-1, 2^{j+1}+1)$ in
the EM-spectral sequence.
\end{thm}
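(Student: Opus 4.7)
The plan is to apply Smith's Collapse Theorem (Theorem \ref{thm:EMSS Collapse}) to the pullback square
$$
\begin{CD}
BG(Q) @>>> EBV \\
@VVV @VVpV \\
BW @>BQ>> BBV
\end{CD}
$$
with $p\colon EBV\to BBV=K(V,2)$ the path-loop fibration. For this I must verify the three hypotheses (i)--(iii).

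Conditions (i) and (ii) are essentially formal. Since $EBV$ is contractible, $H^*(EBV)=\FF_2$ is concentrated in degree zero, so $p^*$ is trivially surjective. The kernel of $p^*$ is the augmentation ideal of $H^*(K(V,2))$, which is a polynomial algebra on the Serre generators (the admissible Steenrod operations of excess less than $2$ applied to the fundamental classes $\kappa_i$). Thus its augmentation ideal is generated by a regular sequence, hence is a Borel ideal.

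The substantive step is condition (iii), which amounts to identifying the ideal $J\subseteq H^*(BW)$ generated by $\Img(BQ^*)$. I claim $J=I(Q)$. The polynomial generators of $H^*(BBV)$ are $\kappa_i$ and the iterated Steenrod operations $Sq^{2^j}\cdots Sq^1\kappa_i$ for $j\geq 0$, all pulled back along $BQ$ to elements of the form $Sq^I q_i$. Since $|q_i|=2$, we have $Sq^0 q_i=q_i$, $Sq^1 q_i=\beta(q_i)$, and $Sq^2 q_i=q_i^2$; so the only nontrivial information comes from $Sq^1$, and the Bockstein closed hypothesis gives $\beta(q_i)\in I(Q)$. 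For longer admissible sequences, I would argue by induction: writing $\beta(q_i)=\sum_j L_{ij}q_j$ and applying Steenrod squares via the Cartan formula, each resulting term contains either some $q_k$ or a $Sq^1 q_k=\beta(q_k)$ as a factor, and so stays in $I(Q)$. Hence $J=I(Q)$, and this is a Borel ideal by the hypothesis that $\{q_1,\dots,q_n\}$ is a regular sequence (which in turn holds for Bockstein closed $2$-power exact extensions by Proposition \ref{pro:regularsequence}).

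With all three conditions verified, Smith's theorem gives $E_2^{*,*}=E_\infty^{*,*}=A^*(Q)\otimes \Lambda^*(u_{ij})$, where the exterior generators $u_{ij}$ arise from the Koszul resolution of $\FF_2$ over $\FF_2[\kappa_{ij}]$ and sit in bidegree $(-1,|\kappa_{ij}|)$. A direct computation $|Sq^{2^j}\cdots Sq^2 Sq^1 \kappa_i|=2+1+2+\cdots+2^j=2^{j+1}+1$ pins down the bidegree as $(-1,2^{j+1}+1)$. The final statement, $H^*(G(Q),\FF_2)\cong A^*(Q)\otimes \Lambda^*(\hat u_{ij})$ as $\FF_2$-vector spaces, follows by choosing lifts $\hat u_{ij}$ of the $E_\infty$ generators through the filtration; the vector-space caveat is standard and reflects the fact that extension problems in the associated graded may perturb the algebra structure on the nose (this is precisely what Theorem \ref{thm:R=L} will unravel). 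The main obstacle in the argument is the Steenrod-closure step in condition (iii): verifying cleanly, using only the Bockstein closed hypothesis, that the entire image of $BQ^*$ is forced into $I(Q)$, rather than a potentially larger ideal.
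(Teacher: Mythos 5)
Your proposal is correct and follows essentially the same route as the paper: both apply Smith's Collapse Theorem to the path--loop pullback square, note that conditions (i) and (ii) are automatic from the structure of $H^*(K(V,2))$ as a polynomial algebra on admissible Steenrod sequences on the $\kappa_i$, and reduce condition (iii) to the identification $J=I(Q)$, which follows because $Sq^1 q_i=\beta(q_i)\in I(Q)$ by Bockstein closedness and $Sq^2 q_i=q_i^2\in I(Q)$ automatically, so that $I(Q)$ is Steenrod-closed and absorbs all the pulled-back generators $Sq^{2^j}\cdots Sq^1\kappa_i$. Your Cartan-formula induction for the longer admissible sequences just makes explicit what the paper states in one sentence, so there is no substantive difference.
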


\begin{proof}
See the discussion above. The only thing that was not explained was
the selection of the exterior elements. This comes from the fact
that $(BQ)^*(\kappa_{i})=q_i$ and a change of rings isomorphism. For
details see \cite{Smith1}.
\end{proof}

The reader might have noticed that when we compare the result of the
EM-spectral sequence calculation with the result previously obtained
with the LHS-spectral sequence, they look different. The
LHS-spectral sequence gave under the same conditions that
$$H^*(G(Q),\mathbb{F}_2) \cong A^*(Q) \otimes \mathbb{F}_2[s_1,\dots,s_n]$$
as algebras. However the EM-spectral sequence above shows that
$$H^*(G(Q),\mathbb{F}_2) \cong A^*(Q) \otimes \Lambda^*(\hat{u}_{ij})$$
as vector spaces. The point is this last isomorphism is not of
algebras and the mentioned exterior algebra is isomorphic to the
polynomial algebra $\FF_2[s_1,\dots,s_n]$ as $\FF_2$-vector spaces.
Indeed note that $\hat{u}_{ij}$ lives in degree
$(2^{j+1}+1)-1=2^{j+1}$. The vector space isomorphism comes from
mapping $\hat{u}_{ij}$ to $s_i^{2^j}$ and extending to square free
powers of the $\hat{u}_{ij}$ as if making an algebra map. To see
this is indeed an isomorphism of graded vector spaces just note that
a power of $s_i$ say $s_i^n$ can be expressed uniquely as a (square
free) product of 2-power powers of $s_i$ using the 2-adic expansion
of $n$. For example $s_3^5=s_3^4s_3^1$ and hence would be associated
to $\hat{u}_{32}\hat{u}_{30}$ since $2^2=4$ and $2^0=1$. We record
this vector space isomorphism for reference.

\begin{pro}[Exterior-Polynomial vector space isomorphism]
\label{pro:ext=poly} If $$ \Lambda^* = \Lambda^*(\hat{u}_{ij} \mid
i=1,\dots,n , \ j \geq 0)$$ and $P^*=\mathbb{F}_2[s_1,\dots,s_n]$
with $|s_i|=2, |\hat{u}_{ij}|=2^{j+1}$ then $\Lambda^* \cong P^*$ as
graded vector spaces via the isomorphism that takes $\hat{u}_{ij}$
to $s_i^{2^j}$ extended to the canonical vector space basis of
$\Lambda^*$ in the obvious way.
\end{pro}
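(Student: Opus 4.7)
The plan is to exhibit explicit homogeneous bases on both sides, construct the claimed bijection via 2-adic expansions, and verify that it preserves the grading. Since $\mathbb{F}_2$ is the ground field and we only claim a vector space isomorphism, no algebraic compatibilities need to be checked.

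First I would fix bases. A canonical $\mathbb{F}_2$-basis for the exterior algebra $\Lambda^*$ is the set of square-free monomials
\[
\hat{u}_S := \prod_{(i,j)\in S} \hat{u}_{ij},
\]
indexed by finite subsets $S \subseteq \{1,\dots,n\}\times \mathbb{Z}_{\geq 0}$, with the empty product giving the unit. A canonical $\mathbb{F}_2$-basis for $P^* = \mathbb{F}_2[s_1,\dots,s_n]$ is the set of monomials $s^{\mathbf{a}} := s_1^{a_1}\cdots s_n^{a_n}$ for tuples $\mathbf{a}=(a_1,\dots,a_n) \in \mathbb{Z}_{\geq 0}^n$.

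Next I would use the uniqueness of 2-adic expansions to set up the bijection on basis elements. Every nonnegative integer $a_i$ has a unique expression $a_i = \sum_{j\geq 0} \epsilon_{ij} 2^j$ with $\epsilon_{ij} \in \{0,1\}$ and only finitely many $\epsilon_{ij}$ nonzero. Assigning to $\mathbf{a}$ the finite set
\[
S(\mathbf{a}) := \{(i,j) : \epsilon_{ij} = 1\}
\]
gives a bijection $\mathbf{a} \leftrightarrow S(\mathbf{a})$ between $\mathbb{Z}_{\geq 0}^n$ and finite subsets of $\{1,\dots,n\}\times \mathbb{Z}_{\geq 0}$. Define the proposed linear map $\varphi : \Lambda^* \to P^*$ on basis elements by
\[
\varphi(\hat{u}_{S(\mathbf{a})}) = s^{\mathbf{a}},
\]
which is the unique linear extension of the rule $\hat{u}_{ij}\mapsto s_i^{2^j}$ to square-free monomials (treating $\hat{u}_{ij_1}\cdots \hat{u}_{ij_k}$ formally as if $\varphi$ were multiplicative on this set of basis vectors). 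By construction $\varphi$ carries a basis bijectively to a basis, hence is a vector space isomorphism.

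It remains to check that $\varphi$ is homogeneous. On the exterior side,
\[
|\hat{u}_{S(\mathbf{a})}| = \sum_{(i,j)\in S(\mathbf{a})} 2^{j+1} = 2 \sum_{i=1}^n \sum_{j\geq 0} \epsilon_{ij} 2^{j} = 2\sum_{i=1}^n a_i,
\]
while on the polynomial side $|s^{\mathbf{a}}| = 2\sum_{i=1}^n a_i$ since $|s_i|=2$. Thus $\varphi$ preserves degree, completing the isomorphism of graded vector spaces. There is no serious obstacle here; the only point that needs care is keeping the correspondence square-free on the $\Lambda^*$ side, which is exactly why the 2-adic expansion (each bit used at most once) provides the right indexing.
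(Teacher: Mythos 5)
Your proof is correct and follows the same route as the paper: both set up the bijection between square-free monomials in the $\hat{u}_{ij}$ and monomials in the $s_i$ via uniqueness of 2-adic expansions, and both verify degrees match since $\sum_{(i,j)} 2^{j+1} = 2\sum_i a_i = |s^{\mathbf{a}}|$. Your write-up is simply a more explicit version of the paper's sketch.
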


\section{Proof of Theorem \ref{thm:R=L}}
\label{sect:proof of the main theorem}

In this section we prove Theorem \ref{thm:R=L} using the
Eilenberg-Moore spectral sequence calculations given in the previous
section.

Before the proof, we first discuss the Steenrod algebra structure of
the EM-spectral sequence. For this purpose we will want to use the
bar resolution and not the smaller Koszul resolutions used
previously as it is more ``geometric''. This bar resolution has an
induced Steenrod module structure on the $p=-1$ and $p=0$ lines that
induces the Steenrod module structure on $H^*(G(Q))$. This is proven
as Corollary 4.4 in \cite{Smith1}. In fact, we will work out
representatives of the $u_{i0}$ in the bar resolution and work out a
formula for $Sq^1$ on them. Just for dimensional reasons, if we
throw the $u_{i0}$ into a vector $u_0$, one has $Sq^1(u_0)=Ru_0 +$
terms in lower filtration where $R \in \Hom(W, \End(V))$, or
intuitively, is a matrix with entries in $A^1(Q)$. By results in
\cite{Smith1}, this lifts to the identity $Sq^1(s)=Rs + $term in
$A^3(Q)$. Thus, if the calculation for $Sq^1(u_0)$ gives $R=L$, then
we will have the proof of Theorem \ref{thm:R=L}.

Let us first set out finding explicit representatives of the
$u_{i0}$ in the $E_1$ term of the bar resolution. Since we know they
survive to $E_{\infty}^{-1,3}$ we know they will have to be
permanent cycles.

In our case, the $p=-1$ line of the EM-spectral sequence (in the bar
resolution) is $H^*(BW) \otimes \bar{H}^*(BBV) \otimes k$. Let us
write $H^*(BW)=\mathbb{F}_2[x_1,\dots,x_n]$. Then $Sq^1(q_i)=
\Sigma_{j} L_{ij}q_j$ where $L_{ij} \in H^1(W)$ since our quadratic
form is Bockstein closed. Using the usual bar notation, let us
define elements $v_i = [1|\kappa_{i0}|1] + \Sigma_j
[L_{ij}|\kappa_j|1] \in E_1^{-1,3}$ where
$\kappa_{i0}=Sq^1(\kappa_i)$ and $\kappa_i$ is the $i$-th
characteristic element of $H^*(BBV)$. Let us compute $d_1(v_i)$. Let
$p: EBV \to BBV$ be the path loop fibration then
\begin{align*}
\begin{split}
d_1(v_i) &= ([1(BQ)^*(\kappa_{i0})|1] + [1|p^*(\kappa_{i0})1])
+ \Sigma_j ([L_{ij}(BQ)^*(\kappa_j)|1] + [L_{ij}|p^*(\kappa_j)1]) \\
&= [Sq^1(q_i)|1] + \Sigma_j [L_{ij}q_j | 1] \\
&= [Sq^1(q_i) + \Sigma_j L_{ij}q_j | 1] \\
&= 0.
\end{split}
\end{align*}
Since $v_i$ lies on the $p=-1$ line, all further differentials on
$v_i$ must vanish for dimensional reasons and so indeed the $v_i$
are permanent cycles. Also note that since $E_1^{-2,3}=0$ for
dimensional reasons, the vector space spanned by the $v_i$ is not
the image of $d_1$. Similarly for dimensional reasons it cannot be
the image of any $d_r$. (This is because one has to use $s$ tensors
of $\bar{H}^*(BBV)$ in the line $p=-s$ and hence the line vanishes
below $q=2s$.) Thus the space spanned by the $v_i$ embeds into
$E_{\infty}^{-1,3}$. However comparing with the Koszul resolutions
$E_{\infty}^{-1,3}$ term we see then that the span of the $v_i$ must
be the same as the span of the $u_{i0}$ in $E_{\infty}$. Thus,
without loss of generality we may assume (by changing our basis for
the span of the $s_i$) that $v_i$ represents $u_{i0}$. Thus it
remains to compute $Sq^1(v_i)$ to figure out $Sq^1(s_i)$ up to terms
of lower filtration.

We now carry out the computation of $Sq^1(v_i)$ using Larry Smith's
work showing that $Sq^1$ on $H^*(BW) \otimes \bar{H}^*(BBV) \otimes
\FF _2$ naturally induced by viewing that $p=-1$ line as $H^*(BW
\times BBV \times pt)$ is compatible with the $Sq^1$ action on the
cohomology of the total space $H^*(BG(Q), \FF _2)$. Note
$v_i=[1|\kappa_{i0}|1]+\Sigma_j [L_{ij}|\kappa_j|1]$. Here recall
that $[a|b|c]$ can be identified with the cross product $a \times b
\times c$ in $H^*(BW \times BBV \times pt )$. Using the Cartan
formula for Steenrod squares on cross products, we see that $Sq^1$
will operate like a derivation and so we get
\begin{align*}
\begin{split}
Sq^1(v_i) &= [1|Sq^1(\kappa_{i0})|1] + \Sigma_j ([Sq^1(L_{ij})|\kappa_j|1]
+ [L_{ij}|Sq^1(\kappa_j)|1]) \\
&=\Sigma_j ([L_{ij}^2|\kappa_j|1] + [L_{ij}|\kappa_{j0}|1])
\end{split}
\end{align*}
since $Sq^1(\kappa_{i0})=Sq^1(Sq^1(\kappa_i))=0$ and
$Sq^1(L_{ij})=L_{ij}^2$ since the degree of $L_{ij}$ is one. Since
the $p=-1$ line is a module over the $p=0$ line, we can write
$$[L_{ij}|\kappa_{j0}|1]=L_{ij}[1|\kappa_{j0}|1]=L_{ij}(v_j+\sum_k
[L_{jk}|\kappa_k|1]).$$ Plugging this into the above equations and
simplifying, one gets
$$Sq^1(v_i) = \sum_j L_{ij}v_j + (\sum_j L_{ij}^2 [1|\kappa_j|1]+
\sum_j \sum_k L_{ij}L_{jk} [1|\kappa_k|1]).$$ Let $v$ be the column
vector with $i$-th coordinate $v_i$, and let $L$ be the matrix with
$(i,j)$-entry $L_{ij}$ and $c$ be the column vector with $i$-th
coordinate $[1|\kappa_i|1]$. Then the above equation becomes
$$Sq^1(v)=Lv + (Sq^1(L)+L^2)c$$
where $Sq^1$ on a matrix just means apply it on each entry. In the
next theorem, we will show that the second term $(Sq^1(L)+L^2)c$ in
the above equation is zero in $E_2^{*,*}=E_{\infty}^{*,*}$ by
showing that it is a boundary under $d_1$ and this then shows that
$Sq^1(v)=Lv$ in $E_{\infty}^{*,*}$ which lifts to show $Sq^1(s) = Ls
+ \eta$ with $\eta$ a column matrix with entries in $A^3(Q)$.

\begin{thm}[Steenrod Structure]
\label{thm:Steenrod Structure} Let $Q: W \to V$ be a Bockstein
closed quadratic map whose components $q_i$ form a regular sequence
in $H^*(W)$ (for example if it is 2-power exact). Let $\dim(V)=n$,
then $H^*(G(Q)) \cong k[s_1,\dots,s_n] \otimes A^*(Q)$ as algebras.
If we write $\beta(q)=Sq^1(q)=Lq$ where $q$ is the column vector
with entries the components of $Q$ (we can write this as $Q$ is
Bockstein closed) then we have:
$$\beta(s)=Ls + \eta$$ where $\eta$ is a column vector with entries in
$A^3(Q)$ and $s$ is the column vector with entries the $s_i$. Since
$A^*(Q)$ is the image of $H^*(BW) \to H^*(BG(Q))$, this determines
the structure of $H^*(G(Q))$ over the Steenrod algebra completely as
all Steenrod operations can be determined from this information and
the axioms of the Steenrod algebra.
\end{thm}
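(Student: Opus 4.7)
The plan is to finish the argument that the preceding discussion has set up. Theorem \ref{thm:mod 2 cohomology} has already identified $H^*(G(Q);\FF_2)$ with $\FF_2[s_1,\dots,s_n]\otimes A^*(Q)$ as an algebra, and the EM spectral sequence analysis above has produced explicit bar-resolution representatives $v_i=[1|\kappa_{i0}|1]+\sum_j [L_{ij}|\kappa_j|1]$ for the generators $u_{i0}\in E_\infty^{-1,3}$, together with the formula $Sq^1(v)=Lv+(Sq^1(L)+L^2)c$. So the remaining task splits into (a) showing that the obstruction $(Sq^1(L)+L^2)c$ is killed in $E_\infty^{-1,4}$, and (b) lifting the resulting identity $Sq^1(v)=Lv$ back to the cohomology ring.

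For step (a) I would start from the tautology $\beta^2(q)=0$. Applying $\beta$ to $\beta(q)=Lq$, and using $\beta(L_{ij})=L_{ij}^2$ on the degree-one entries of $L$, gives
\[
\bigl(Sq^1(L)+L^2\bigr)q \;=\; 0
\]
as a matrix identity in $H^*(BW;\FF_2)$. Writing $M:=Sq^1(L)+L^2$, each row $M_i=(M_{i1},\dots,M_{in})$ is then a syzygy of the sequence $(q_1,\dots,q_n)$. Since that sequence is regular in $\FF_2[x_1,\dots,x_n]$ (by hypothesis, or by Proposition \ref{pro:regularsequence} in the $2$-power exact case), its first syzygy module is generated by the Koszul relations $q_je_k+q_ke_j$ for $j<k$. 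A degree count ($|M_{ik}|=|q_j|=2$) forces the coefficients of those relations to be constants $c^{(i)}_{jk}\in\FF_2$, yielding
\[
M_{il} \;=\; \sum_{j<l} c^{(i)}_{jl}\,q_j \;+\; \sum_{k>l} c^{(i)}_{lk}\,q_k .
\]

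With these scalars I would lift the syzygy to the bar resolution by forming the $p=-2$ cochain
\[
w^{(i)} \;=\; \sum_{j<k} c^{(i)}_{jk}\bigl([1|\kappa_j|\kappa_k|1]+[1|\kappa_k|\kappa_j|1]\bigr)
\]
and computing $d_1(w^{(i)})$ directly from the formula in Section \ref{section: EMSS}. The key point is that in characteristic $2$ the unwanted internal-product terms $[1|\kappa_j\kappa_k|1]$ and $[1|\kappa_k\kappa_j|1]$ are equal and so cancel, leaving only the boundary contributions $[q_j|\kappa_k|1]+[q_k|\kappa_j|1]$, which reassemble into precisely $\sum_l M_{il}[1|\kappa_l|1]=(Mc)_i$. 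This exhibits $(Sq^1(L)+L^2)c$ as a $d_1$-boundary, so $Sq^1(v)=Lv$ already in $E_2=E_\infty$.

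For step (b) I would invoke the Steenrod compatibility of the EM spectral sequence recalled via Corollary 4.4 of \cite{Smith1}: the EM filtration of $H^*(G(Q);\FF_2)$ is a filtration of ${\cal A}_2$-modules whose associated graded agrees with $E_\infty$ on the $p=0$ and $p=-1$ lines. Lifting $Sq^1(v)=Lv$ through this filtration produces $\beta(s)=Ls+\eta$ with $\eta$ living in the next filtration step, namely the image of the inflation $H^*(BW;\FF_2)\to H^*(BG(Q);\FF_2)$, which is $A^*(Q)$; the degree constraint $|\eta|=3$ then forces $\eta\in A^3(Q)$. The step I expect to be most delicate is the bar-resolution boundary check: one must verify that the symmetrization $[1|\kappa_j|\kappa_k|1]+[1|\kappa_k|\kappa_j|1]$ really kills every commutative-product contribution in characteristic $2$, which is what makes the purely scalar Koszul lift work and sends the obstruction cleanly to zero.
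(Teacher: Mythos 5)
Your proposal is correct and follows essentially the same route as the paper: the paper likewise starts from $Sq^1(v)=Lv+(\beta(L)+L^2)c$ in the bar resolution, writes $\beta(L)+L^2=\sum_i T_iq_i$ with symmetric, zero-diagonal scalar coefficient matrices (your Koszul-syzygy argument produces exactly the same data), exhibits the obstruction as $d_1$ of the same symmetrized $p=-2$ element $\sum_{j<k}c^{(i)}_{jk}\bigl([1|\kappa_j|\kappa_k|1]+[1|\kappa_k|\kappa_j|1]\bigr)$, and lifts via Corollary 4.4 of Smith. The only difference is cosmetic: you package the coefficient extraction as Koszul generation of the first syzygy module, while the paper cites its earlier Proposition 8.2 and reads off the symmetry from linear independence of the products $q_iq_j$.
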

\begin{proof}
In the paragraph before the statement of this theorem, it was shown
in the bar resolution that
$$Sq^1(v) = Lv + (Sq^1(L)+L^2)c$$ in $E_1^{-1,4}$ of the bar
resolution model of the Eilenberg-Moore spectral sequence.
Furthermore we showed that $v$ consists of permanent cycles that
survive and represent $s$. Thus to get the main formula of the
theorem it is sufficient to show $(Sq^1(L)+L^2)c$ is zero in
$E_2^{-1,4}=E_{\infty}^{-1,4}$ as this will give $$Sq^1(v)=Lv$$
which will lift to
$$Sq^1(s)=Ls + \eta$$ where $\eta$'s components have to live in lower
filtration, i.e., on $E_{\infty}^{0,3}=A^3(Q)$. This will prove the
theorem as the other statements have been proven in previous
sections. Thus it remains to show that $(Sq^1(L)+L^2)c$ is a
boundary under $d_1$.

Note that by the regularity of the sequence $q_1,\dots, q_n$, the
equality $[\beta(L)+L^2]q=0$ gives that $$\beta(L)+L^2=\sum_i T_i
q_i $$ for some scalar matrices $T_1,.., T_n$ (see Proposition 8.2
in \cite{PaYa}). Note that $[B(L)+L^2]q=0$ gives that for all $k$,
$$ \sum _i \sum _j  T_i (k, j) q_i q_j =0.$$
Hence, $T_i (k, i)=0$ for every $k$ and $i$, and $T_i (k, j)= T_j
(k, i)$ for every $1\leq i,j \leq n$.
Here we are using the fact that the set
$$\{q_iq_j \mid 1\leq i < j \leq n  \}$$ is linearly independent in
$H^*(W)$ as the $q_i$ are algebraically independent since they form
a regular sequence. Let us call this common entry $$a_k (i,j)=T_i
(k, j)= T_j (k, i).$$ The $a_k$ are then (skew) symmetric scalar
matrices with zeros down the diagonal as $a_k(i,i)=T_i(k,i)=0$.
Since we are in characteristic 2, we can view these as skew
symmetric matrices which will be useful in what follows.

Let $$\alpha_k= \sum _{i,j} a_k (i,j) [1|\kappa_{i}|\kappa_j|1]$$ in
$E_1 ^{-2, 4}$. Then, $$d_1(\alpha_k)= \sum _{i,j} a_k (i,j) ([q_i|
\kappa_{j}|1] + [1|\kappa_i\kappa_j|1]) =\sum _{i,j} a_k (i,j)
[q_i|\kappa_j|1]$$ as the second term in the first sum is zero due
to the skew-symmetry of the matrices $a_k$. Thus using that the
$p=-1$ line is a module over the $p=0$ line with an action that is
purely in the left slot we get
$$d_1(\alpha_k) = \sum_{ij} a_k(i,j)q_i[1|\kappa_j|1].$$
Using that $\sum_i a_k(i,j)q_i=\sum_i T_i(k,j)q_i$ is the
$(k,j)$-entry of $B(L)+L^2$ we get:
$$ d_1(\alpha_k) = \sum_j(B(L)+L^2)_{k,j} [1|\kappa_j|1].$$
Writing $c$ to be the column vector with components $[1|\kappa_j|1]$
as before and $\alpha$ to be the column vector with entries the
$\alpha_k$, then this reads as
$$d_1(\alpha) = (B(L)+L^2)c$$
which shows that indeed the desired term is a $d_1$ boundary and hence
completes the proof of the theorem.
\end{proof}

\section{An obstruction class for uniform double lifting}
\label{sect:uniformdoublelifting}

Let  $E(Q): 0 \to V \to G(Q) \to W \to 0 $ be a Bockstein closed
$2$-power exact extension with associated quadratic map $Q: W \to
V$. Recall that this means $\dim W=\dim V =n$, and the quadratic map
$Q$ is Frattini and effective. Note that in this case $k$-invariants
$q_1,\dots, q_n$ form a regular sequence, and as a consequence there
is a unique $L \in \Hom (W, \End (V))$ such that $\beta(q)=Lq$ (see
Proposition 8.1 of \cite{PaYa}).

First we make a simple observation about $L$. Note that applying the
Bockstein operator to the equation $\beta(q)=Lq$ gives
$$0 = \beta(L)q + L \beta (q)=[\beta (L)+L^2 ] q.$$
Since $q_1, \dots , q_n$ forms a regular sequence, the entries of
$\beta (L)+L^2 $ must be linear combinations of components of $q$
(see Proposition 8.2 in \cite{PaYa}). This means that linear map $L
: W \to \End(V)$ extends to a morphism $\rho_L : Q \to Q_{\gl (V)} $
in $\bf Quad$. So, $L$ defines a representation of $Q$. Let us
denote this representation also with $L$.

Now, we consider the cohomology ring of the group $G(Q)$. By Theorem
\ref{thm:mod 2 cohomology}, we have
$$H^*(G(Q);\FF _2) \cong \FF _2[s_1,\dots,s_n] \otimes A^*(Q)$$
as graded algebras, where $\deg(s_i)=2$ for $i=1,\dots, n$. In the
previous section, we calculated the images of Bockstein operator on
$s_i$'s. We obtained that
$$ \beta (s)=Ls +\eta $$
where $\eta$ is a column vector whose entries are in $H^3(W, \FF _2
)$. Applying the Bockstein operator again, we get
$$ 0=\beta (L)s+L \beta (s) + \beta (\eta)= [\beta (L)+L^2 ]s
+[\beta (\eta)+L \eta].$$ Since, we already know that $\beta
(L)+L^2=0$ in $A^*(Q)$, we obtain that
$$\beta(\eta)+L \eta =0$$ in $A(Q)^*$.  This shows that shows that $\eta $ is a
$3$-dimensional cocycle in the chain complex $C^* (Q, L)$. So, it
defines a three dimensional cohomology class $[\eta]\in H^3 (Q, L)$.
We will now show that this class is an obstruction class for
uniformly lifting the extension $E(Q)$ twice.

Let $E' : 0 \to V \to \Gamma  \to G\to 0$ denote the extension with
extension class $s \in H^2 (G, V)$ where $G=G(Q)$. Since $\res ^{G}
_V s_i=t_i ^2 $ for all $i$, the restriction of this extension to $V
\subseteq G$ gives the extension $0 \to V \to M \to V \to 0 $ where
$M \cong (\ZZ /4)^n$ as an abelian group. Let $0 \to V \to M \to V
\to 0$ be the $\ZZ /4 W$-lattice whose logarithm is $L : W \to \End
(V)$. By this we mean that the representation for $M$ is of the form
$w \to I+2L(w)$ mod $4$ (see Lemma 4.3 \cite{PaYa} for details). By
Theorem 4.6 in \cite{PaYa}, $E'$ lifts to an extension
$$E'' : 0 \to M \to \widetilde \Gamma \to G\to 0 $$ if and only if
$\beta (s)=Ls$. So, $\eta =\beta (s)+Ls$ is the obstruction for such
a lifting. But, the extension $E'$ is determined by our choices of
generators $s_i$. We can replace each $s_i$ with
$$s_i ' = s_i + \xi_i$$
for some $\xi _i \in H^1(W, \FF _2)$ and the resulting extension $E'$ will still satisfy
the above condition. It may happen that for this new extension $E'$,
the obstruction for lifting it to $E''$ is zero. Let us calculate
the new obstruction
$$\eta'=\beta (s')+Ls'=\beta (s+\xi)+L(s+\xi)=\eta+
(\beta(\xi)+L\xi).$$ Note that the class $\beta(\xi)+L\xi$ is equal
to $\delta (\xi)$ where $\delta$ is the boundary map of the chain
complex $C^* (Q, L)$. So, we proved that in the set of all possible
extensions $E' : 0 \to V \to \Gamma \to G \to 0$ whose restriction
to $V \subseteq G$ gives the extension $0 \to V \to M \to V \to 0 $,
there is at least one extension $E'$ that lifts to $E''$ of the form
$$E'' : 0 \to M \to \widetilde \Gamma \to G\to 0 $$
if and only if $\eta$ is a coboundary, i.e., the cohomology class
$[\eta]=0$ in $H^3(Q, L)$. So, the proof of Theorem
\ref{thm:doublelifting} is complete.

\section{The Bockstein spectral sequence}
\label{sect:BockSpectralSeq}

Fix a prime $p$ and let $\FF _p$ be the field with $p$ elements. The
Bockstein spectral sequence is a well-known technique of relating
the mod-$p$ cohomology of a space with its integral cohomology. It
arises from the Massey exact triple
$$\xymatrix{H^*(X;\mathbb{Z})\ar[rr]^{\cdot p} &&H^*(X;\mathbb{Z})\ar[dl]^
{\varphi}\\ &B_1^*=H(X,\FF_p)\ar[ul]^{\hat{\beta}}&}$$ associated
to the sequence of coefficients $0 \to \mathbb{Z} \to \mathbb{Z} \to
\FF_p \to 0$. Here $\hat{\beta}$ is the integral Bockstein operator and $\varphi$ is the map induced by mod $p$ reduction. The
composition $\varphi \circ \hat{\beta}$ yields the differential
given by the standard Bockstein $\beta$ on $H^*(X;\FF_p)$. If we
denote the cohomology of this differential by $B_2^*$ then the
Massey exact triple above yields an exact triple
$$\xymatrix{pH^*(X;\mathbb{Z})\ar[rr]^{\cdot p} &&pH^*(X;\mathbb{Z})
\ar[dl]^{\varphi_2}\\&B_2^*\ar[ul]^{\hat{\beta}_2}& }$$ and by
iteration we get Massey exact triples of the form
$$\xymatrix{p^rH^*(X;\mathbb{Z})\ar[rr]^{\cdot p} & & p^rH^*(X;\mathbb{Z})
\ar[dl]^{\varphi_{r+1}}\\&B_{r+1}^*\ar[ul]^{\hat{\beta}_{r+1}}&
}$$ where $\varphi_{r+1} \circ \hat{\beta}_{r+1}=\beta_{r+1}$
gives a differential on $B_{r+1}^*$ called the $(r+1)$-st higher
Bockstein. The collection of the Massey exact triangles above is
referred to as the {\it Bockstein spectral sequence} for $X$ at the
prime $p$.

Furthermore when $X$ is the classifying space of a finite $p$-group,
it is known that $B^*_{\infty}=H^*(pt ,\FF_p)$ is the limit of the
$B_r^*$. Note if $s > 0$ and $\exp(H^s(X;\mathbb{Z}))=p^{r+1}$, then
the multiplication by $p$ in this last diagram is zero. Hence
$\varphi _{r+1}$ gives an injection $0 \neq p^rH^s(X;\mathbb{Z})
\to B_{r+1}^s$ and hence $B_{r+1}^s \neq 0$. On the other hand it is
clear that $B_{r+2}^s=0$ from the next Massey triangle. Thus it is
not hard to see that $\exp(H^s(X;\mathbb{Z}))=p^{r+1}$ if and only
if $B_{r+1}^s \neq 0$ but $B_{r+2}^s = 0$. So, by seeing how quickly
any level of the Bockstein spectral sequence converges to zero, we
can determine the exponent of the integral cohomology of the
$p$-group at that level.

Now we have seen that if $Q: W \to V$ is a quadratic map associated
to a Bockstein closed 2-power exact sequence $E(Q): 0 \to V \to G(Q)
\to W \to 0$ which lifts uniformly twice then
$$H^*(G(Q);\mathbb{F}) \cong \mathbb{F}[s_1,\dots,s_n] \otimes A^*(Q)$$
where $n=\dim(W)$. As $Q$ is Bockstein closed, we may write
$\beta{q}=Lq$ for $L \in \Hom(W,\End(V))$ as usual and we have seen
in this case we can choose $s$ so that $\beta{s}=Ls$ also. Thus
$(B_1^*(G(Q)),\beta_1)$ is completely known from the matrix $L$
given from the Bockstein closed quadratic form $Q$.

To compute $B_2^*$, we decompose the polynomial algebra
$\mathbb{F}[s_1,\dots,s_n]=\oplus_{i=0}^{\infty} S^i$ into its
homogeneous components $S^i$. We then get a decomposition of
differential algebras:
$$(B_1^*(G(Q)),\beta_1) = \oplus_{i=0}^{\infty} (S^i \otimes A^*(Q), \beta)$$
and thus $$ B_2^*(G(Q)) = \oplus_{i=0}^{\infty} H^*(S^i \otimes
A^*(Q),\beta).$$ Note that $B_2^*(G(Q))$ contains the information
for the distribution of all the higher torsion (of exponent greater
than $p$) for $H^*(G(Q);\mathbb{Z}).$ Note that $S^0=\FF_2$, so the
term corresponding to $i=0$ is just the cohomology of $A^*(Q)$ under
$\beta$ which is $H^*(Q;\FF_2)$.

Note that for the summand coming from $S^1$, we have a differential
given by $\delta(f)=\beta{f}+L(f)$ and so the cohomology of that
term corresponds to $H^*(Q ; L)$ where $L$ denotes the $Q$-module
associated to the matrix $L$. However since the elements of $S^1$
are degree 2 elements, the contribution of $H^*(Q;L)$ will be in
degree $*+2$. Since $S^i$ is just the $i$-fold symmetric product of
$S^1$, the complexes for these terms are derived from those for
$S^1$, and we will denote the corresponding matrix and $Q$-module by
$\Sym^i (L)$. Note also that since the elements of $S^i$ have degree
$2i$, their contribution will be shifted in degree by $2i$. By these
observations we can conclude the following:

\begin{thm}
Let $G(Q)$ be a group associated to a Bockstein closed, 2-power
exact quadratic form with $\beta(q)=Lq$. Furthermore suppose $Q: W
\to V$ can be lifted uniformly twice. Then
$$H^*(G(Q);\FF_2) \cong \mathbb{F}_2[s_1,\dots,s_n] \otimes
A^*(Q)$$ where $n=\dim(W)$ and the higher torsion is given by
$$B_2^* \cong \oplus_{i=0}^{\infty} H^{*-2i}(Q,\Sym^i(L)).$$
Thus, the higher torsion in $G(Q)$ is computable from the cohomology
of the quadratic form $Q$ in coefficients given by various symmetric
powers of the defining module $L$.
\end{thm}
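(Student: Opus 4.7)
The plan is to derive the formula for $B_2^*$ by decomposing the first Bockstein complex into $s$-graded pieces and identifying each piece with a shifted version of the cochain complex $C^*(Q,-)$ defined in Section \ref{sect:cohdefn}. First I would apply Theorem \ref{thm:mod 2 cohomology} to get $H^*(G(Q);\FF_2) \cong \FF_2[s_1,\dots,s_n] \otimes A^*(Q)$, and then use the uniform double lifting hypothesis together with Theorem \ref{thm:doublelifting} to conclude that the obstruction $[\eta] \in H^3(Q,L)$ vanishes. Following the argument in Section \ref{sect:uniformdoublelifting}, this allows one to adjust the generators $s_i$ by suitable elements of $H^1(W,\FF_2)$ so that the formula of Theorem \ref{thm:R=L} simplifies to $\beta(s) = Ls$ with no residual term in $A^3(Q)$.

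Next I would decompose $\FF_2[s_1,\dots,s_n] = \bigoplus_{i\geq 0} S^i$ into its homogeneous pieces, noting that $S^i$ sits in cohomological degree $2i$. Because $\beta(s_j) = \sum_k L_{jk} s_k$ with $L_{jk} \in A^1(Q)$ and $\beta$ acts by the ordinary Bockstein on the $A^*(Q)$-factor, the Leibniz rule shows that the first Bockstein $\beta_1$ preserves the $s$-degree while raising the $A^*(Q)$-degree by one. Hence as differential graded complexes
\[
(B_1^*(G(Q)),\beta_1) \cong \bigoplus_{i\geq 0} \bigl( S^i \otimes A^*(Q),\ \beta_1|_{S^i\otimes A^*(Q)} \bigr),
\]
and taking cohomology distributes over the direct sum.

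The crucial step is to identify each summand with the shifted cochain complex $C^*(Q,\Sym^i(L))$. I would argue that $S^i$ is naturally the $i$-th symmetric power of the defining $Q$-module $L=(V,L)$: the $s_j$ form a basis parallel to a chosen basis of $V$, and the Leibniz formula
\[
\beta(s_{j_1}\cdots s_{j_i}) = \sum_{t} s_{j_1}\cdots\beta(s_{j_t})\cdots s_{j_i}
\]
is precisely the induced $W$-action on $\Sym^i(V)$ coming from $L \in \Hom(W,\End(V))$. Promoting this to an identification as $Q$-modules requires checking the compatibility with the companion map $\rho_V$, which follows from the observation at the start of Section \ref{sect:uniformdoublelifting} that $L$ itself already defines a representation of $Q$, together with naturality of $\Sym^i$ on the category $\bf Quad$. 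Under this identification, the restriction of $\beta_1$ to $S^i\otimes A^*(Q)$ is exactly the differential $\delta(f) = \beta(f) + R_{\Sym^i(L)} f$ of $C^*(Q,\Sym^i(L))$, and the cohomological shift $*\mapsto *-2i$ comes from the fact that $S^i$ lives in degree $2i$.

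The main obstacle I expect is the bookkeeping involved in matching the Leibniz-derived action on $S^i$ with the abstract $\Sym^i$ construction of $Q$-modules, and in particular verifying that $\Sym^i$ indeed defines a functor on $\bf Quad$ so that $\Sym^i(L)$ is a well-defined $Q$-module with a $V$-component $\rho_V$ compatible with the square identity $\beta(R_{\Sym^i}) + R_{\Sym^i}^2 = T_{\Sym^i}(q)$ needed for $\delta^2 = 0$. Once this identification is in hand, summing over $i$ immediately yields
\[
B_2^*(G(Q)) \cong \bigoplus_{i=0}^{\infty} H^{*-2i}(Q, \Sym^i(L)),
\]
as claimed, while the algebra isomorphism for $H^*(G(Q);\FF_2)$ has already been supplied by Theorem \ref{thm:mod 2 cohomology}.
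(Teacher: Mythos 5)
Your proposal is correct and follows essentially the same route as the paper: use the double-lifting hypothesis to normalize $\beta(s)=Ls$, split $(B_1^*,\beta_1)$ along the $s$-degree decomposition $\FF_2[s_1,\dots,s_n]=\bigoplus_i S^i$, and identify the $i$-th summand with the complex $C^{*}(Q,\Sym^i(L))$ shifted by $2i$. Your extra care in checking that $\Sym^i(L)$ carries a genuine $Q$-module structure (so that $\delta^2=0$) is a point the paper treats only implicitly, but it is not a divergence in method.
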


Note that for each $j\geq 0$, the $j$-th term $B_2^j$ in the above
formula is given by a finite direct sum since there are only a
finite number of nonzero coefficient modules $\Sym^i(L)$ in the sum.
Furthermore, note that the cochain complex used to compute
$H^*(Q,\Sym^i(L))$ has length always given by the length of the
cochain complex $A^*(Q)$ which is finite. Thus the equation above
does yield an algorithm to compute $B_2^j$ for every $j\geq 0$.

\acknowledgement{We thank the referee for careful reading of the paper and for many helpful comments.}

\bigskip

\noindent
Dept. of Mathematics \\
University of Rochester, \\
Rochester, NY 14627 U.S.A. \\
E-mail address: jonpak@math.rochester.edu \\

\bigskip

\noindent
Dept. of Mathematics\\
Bilkent University\\
Ankara, 06800, Turkey. \\
E-mail address: yalcine@fen.bilkent.edu.tr \\

\end{document}